\theoremstyle{plain}
\newtheorem{thm}{Theorem}[section]
\newtheorem{cor}[thm]{Corollary}
\newtheorem{lem}[thm]{Lemma}
\newtheorem{prop}[thm]{Proposition}
\theoremstyle{definition}
\newtheorem{defn}[thm]{Definition}
\newtheorem{exmp}[thm]{Example}
\newtheorem{rem}[thm]{Remark}
\newtheorem{prob}[thm]{Problem}
\theoremstyle{remark}
\newcommand{\cb}{}
\begin{document}

\title[Cappell-Shaneson polynomials]
 {Cappell-Shaneson polynomials} 
\author[H.~Endo]{Hisaaki Endo}
\address{Department of Mathematics\\Institute of Science Tokyo\\
2-12-1 Oh-okayama\\Meguro-ku\\Tokyo 152-8551\\Japan}
\email{endo@math.titech.ac.jp}
\author[K.~Iwaki]{Kazunori Iwaki}
\address{Department of Mathematics\\Institute of Science Tokyo\\
2-12-1 Oh-okayama\\Meguro-ku\\Tokyo 152-8551\\Japan}
\email{iwaki.k.ab@m.titech.ac.jp}
\author[A.~Pajitnov]{Andrei Pajitnov}
\address{Laboratoire Math\'ematiques Jean Leray UMR 6629,
Universit\'e de Nantes,
Facult\'e des Sciences,
2, rue de la Houssini\`ere,
44072, Nantes, Cedex} 
\email{andrei.pajitnov@univ-nantes.fr}
\keywords{high-dimensional knot, knot complement, Cappell-Shaneson knot, Cappell-Shaneson polynomial, 
Alexander polynomial, Diophantine equation}
\date{September 29, 2024; MSC2020: primary 57K45, secondary 57R40}

\maketitle

\begin{abstract}
{\cb
In the seminal work \cite{CS1976}
}
S. Cappell and J. Shaneson constructed a pair of inequivalent
embeddings of $(n-1)$-spheres in homotopy $(n+1)$-spheres 
for every square matrix of order $n$ with special properties (a Cappell-Shaneson matrix). 
A Cappell-Shaneson polynomial is the characteristic polynomial of a Cappell-Shaneson matrix. 
In this paper, we interpret part of the definition of Cappell-Shaneson polynomial 
as algebraic conditions of polynomials in terms of signed reciprocal polynomial and reduction modulo primes, 
and give complete lists of all Cappell-Shaneson polynomials of degrees $4$ and $5$. 
{\cb We construct several infinite series of
 Cappell-Shaneson polynomials of degrees $6$.}
\end{abstract}


\section{Introduction}


A smooth embedding of $S^{n-1}$ into $S^{n+1}$ is called 
an $(n-1)$-knot (or a knot for simplicity). 
Two knots are said to be equivalent if there exists a self-diffeomorphism of $S^{n+1}$ which maps 
one knot onto the other. 
Since many invariants (such as the Alexander polynomial) of a knot are derived from its complement, 
it is difficult to find inequivalent knots with diffeomorphic complements. 
It is known that there are at most two equivalence classes of knots 
with diffeomorphic complements if $n$ is greater than two (see H. Gluck \cite{Gluck1962},
W. Browder \cite{Browder1967}, M. Kato \cite{Kato1969}, and R. Lashof and J. Shaneson \cite{LS1969}).
S. Cappell and J. Shaneson \cite{CS1976}
constructed first examples of inequivalent knots with diffeomorphic complements. 
Their examples are for $n=4$ and $5$. 
Such examples have been constructed by C. McA. Gordon \cite{Gordon1976} (for $n=3$),
A. Suciu \cite{Suciu1992} (for $n\equiv 4$ and $5 \, ({\rm mod}\, 8)$),
and W. Gu and S. Jiang \cite{GJ1999} (for $n=6$ and $7$).
On the other hand, 
knots which belong to several special classes are known to be determined by their complements 
(cf. \cite[Section 1]{Suciu1992}). 
C. McA. Gordon and J. Luecke \cite{GL1989} proved that every classical knot (i.e. $1$-knot) is determined
by its complement. 

S. Cappell and J. Shaneson \cite{CS1976} constructed a pair of embedded $(n-1)$-spheres $K_0$ and $K_1$
in homotopy $(n+1)$-spheres $\Sigma_0$ and $\Sigma_1$, respectively, 
such that $\Sigma_0-K_0$ is diffeomorphic to $\Sigma_1-K_1$ 
for every $n$ greater than one and every element of $\mathrm{SL}(n,\mathbb{Z})$ with special properties, 
which we call a {\it Cappell-Shaneson matrix} of order $n$. 
A {\it Cappell-Shaneson polynomial} $f(x)$ of degree $n$ is the characteristic polynomial of 
a Cappell-Shaneson matrix $A$ of order $n$. 
The polynomial $f(x)$ is nothing but the Alexander polynomial of $K_0$ and $K_1$ associated with $A$. 
Since the companion matrix of a Cappell-Shaneson polynomial is a Cappell-Shaneson matrix, 
we obtain at least one pair $(K_0,K_1)$ of embedded spheres as above 
once we have a Cappell-Shaneson polynomial. 
If a Cappell-Shaneson polynomial also satisfies a
{\cb
certain
}
positivity condition,
the associated $K_0$ and $K_1$ are inequivalent. 
By classical results on smooth structures on spheres, 
both of $\Sigma_0$ and $\Sigma_1$ are diffeomorphic to $S^{n+1}$ if $n=4$ or $5$. 
Thus a pair of inequivalent knots with diffeomorphic complements is obtained from 
each positive Cappell-Shaneson polynomial of degree $4$ or $5$.

{\cb
During a visit of the third author to Courant Institute in 2019
S. Cappell formulated the following problem:

\begin{quote}
{\it
Do the Cappell-Shaneson matrices exist in every dimension $n\geq 4$?
}
\end{quote}

\noindent
The positive answer to this question would imply the existence of
inequivalent knots with diffeomorphic complements in any dimension $\geq 5$.
The same question was formulated also by D. Ruberman.
}

In this paper we focus our attention on algebraic properties of Cappell-Shaneson polynomials. 
In particular, we give complete lists of all (positive) Cappell-Shaneson polynomials of degrees $4$ and $5$ 
and a complete list of Cappell-Shaneson polynomials of degree $6$ each of which satisfies a 
certain condition on its coefficients, 
interpret part of the definition of Cappell-Shaneson polynomial as algebraic conditions of polynomials 
in terms of signed reciprocal polynomial and reduction modulo primes.

The present paper is organized as follows. 
In Section 2, we give precise definitions of Cappell-Shaneson matrices and polynomials 
and investigate
{\cb
their  properties}.
We introduce a notion of regularity for polynomials with coefficients in a field, 
and interpret the regularity at degree $k$ of a doubly monic polynomial as conditions {\cb on }
its signed reciprocal polynomial and exterior powers when $k$ is equal to $2$ or $3$ 
(see Theorem \ref{2-regular} and Proposition \ref{3-regular}). 
In Section 3, we study the regularity of polynomials with coefficients in $\mathbb{F}_p$ 
and its relation with that of polynomials with integer coefficients. 
In Sections 4 and 5, we give complete list of Cappell-Shaneson polynomials of degrees $4$ and $5$ 
(see Theorems \ref{CS4} and \ref{CS5}). 
In Section 6, we examine Cappell-Shaneson polynomials of degree $6$. 
We give a complete list of Cappell-Shaneson polynomials of degree $6$ 
for which the difference of the coefficients of $x^5$ and $x$ is less than or equal to $12$ 
(see Proposition \ref{CS6} and Appendix \ref{app}). 
In Section 7, we discuss Cappell-Shaneson polynomials of degree greater than or equal to $7$,
{\cb
and state some non-existence results in dimension 8, obtained with the help of SageMath.}


\subsection*{Acknowledgements}\label{ackref}


The first author thanks the Nantes University, the DefiMaths program, and 
``Mission Invite'' for the support and warm hospitality. 
The first author is/was supported by JSPS KAKENHI Grant Numbers 
24K06707, 20K03578, 19H01788. 
The third author was supported by JSPS International Fellowships for Research in Japan 
(Short-term), FY2022 Research Abroad and Invitation Program for International Collaboration, 
and the WRH program. 
{\cb The third author thanks these organisations and Tokyo Institute of Technology
for their support and warm hospitality. The third author is grateful to the University of Miami,
the Courant Institute, and Brandeis University for their support and warm hospitality
during his visits in 2019.}


\section{Cappell-Shaneson polynomials}


In this section we give precise definitions of Cappell-Shaneson matrices and polynomials 
and investigate {\cb their properties}.
See also the paper \cite{CS1976} of S. Cappell and J. Shaneson.
We assume that $n$ is an integer greater than one. 

\subsection{Cappell-Shaneson matrices and polynomials}

We begin with a {\cb  definition } of Cappell-Shaneson matrices.

\begin{defn}\label{CSm}
An element $A$ of $\mathrm{SL}(n,\mathbb{Z})$ is called 
a {\it Cappell-Shaneson matrix} of order $n$ if it satisfies the following condition 
$\mathrm{CS}_k$ for every integer $k$ in $\{1,\ldots ,[n/2]\}$. 

$\mathrm{CS}_k$: the determinant of the matrix $I-\bigwedge^kA$ is equal to $+1$ or $-1$, 
where $I$ is the identity matrix and $\bigwedge^kA$ is the $k$-th exterior power of $A$. 

Let $A$ be a Cappell-Shaneson matrix of order $n$ and $f(x)$ the characteristic polynomial of $A$. 
We say that $A$ is {\it positive} if it satisfies the condition $(-1)^nf(t)>0$ for every $t\in (-\infty,0)$. 
\end{defn}

For every positive Cappell-Shaneson matrix $A$ of order $n$, 
Cappell and Shaneson \cite{CS1976} constructed $(n-1)$-spheres $K_0$ and $K_1$ embedded 
in homotopy $(n+1)$-spheres $\Sigma_0$ and $\Sigma_1$, respectively. 
The Alexander polynomial of each of $K_0$ and $K_1$ 
is equal to the characteristic polynomial of $A$. The exterior of each of $K_0$ and $K_1$ 
admits a fibration with fiber diffeomorphic to the punctured $n$-torus and monodromy $A$. 
Although the exteriors of $K_0$ and $K_1$ are diffeomorphic to each other, 
there is no diffeomorphism from $\Sigma_0$ to $\Sigma_1$ which maps $K_0$ onto $K_1$ 
if $n$ is greater than two. 

\begin{rem}
If $n$ is equal to $2,4$, or $5$, then both of $\Sigma_0$ and $\Sigma_1$ are diffeomorphic to 
$S^{n+1}$. Thus $K_0$ and $K_1$ are not equivalent to each other 
as $(n-1)$-knots in $S^{n+1}$ while they have 
diffeomorphic exteriors if $n$ is equal to $4$ or $5$. 
\end{rem}

We next give the definition of Cappell-Shaneson polynomials. 

\begin{defn}
A monic polynomial $f(x)$ of degree $n$ with integer coefficients is called 
a {\it Cappell-Shaneson polynomial} of degree $n$ if it is the characteristic polynomial of 
a Cappell-Shaneson matrix of order $n$. 
A Cappell-Shaneson polynomial $f(x)$ of degree $n$ is called {\it positive} 
if it satisfies the condition $(-1)^nf(t)>0$ for every $t\in (-\infty,0)$. 
\end{defn}

\begin{exmp}[Cappell-Shaneson polynomials of degree $2$]
Let $A$ be a square matrix of order $2$ with integer entries and $f(x)$ the characteristic polynomial of $A$. 
$A$ belongs to $\mathrm{SL}(2,\mathbb{Z})$ if and only if the constant term of $f(x)$ is equal to $1$. 
$A$ satisfies the condition $\mathrm{CS}_1$ if and only if $f(1)$ is equal to $+1$ or $-1$. 
Hence $f(x)$ is equal to $x^2-x+1$ or $x^2-3x+1$. 
It is easy to see that both of these are positive. 
Since the trace $\mathrm{tr}(A)$ of $A$ must be $1$ or $3$, $A$ is conjugate to one of the 
following matrices: 
\[ 
\left(
\begin{array}{rr}
1 & -1 \\
1 & 0
\end{array}
\right), 
\; 
\left(
\begin{array}{rr}
1 & 1 \\
-1 & 0
\end{array}
\right)
\; 
\textrm{and} 
\;
\left(
\begin{array}{rr}
3 & -1 \\
1 & 0
\end{array}
\right).
\]
These matrices are nothing but the monodromies of the left-handed trefoil, the right-handed trefoil, 
and the figure-eight knot in $S^3$, respectively. 
By virtue of the Gordon-Luecke theorem \cite{GL1989}, 
all $1$-knots obtained from the construction of Cappell and Shaneson \cite{CS1976} are only these three. 
\end{exmp}

\begin{exmp}[Cappell-Shaneson polynomials of degree $3$]
Let $A$ be a square matrix of order $3$ with integer entries and 
$f(x)=x^3+c_2x^2+c_1x+c_0$ the characteristic polynomial of $A$. 
$A$ belongs to $\mathrm{SL}(3,\mathbb{Z})$ if and only if $c_0$ is equal to $-1$. 
$A$ satisfies the condition $\mathrm{CS}_1$ if and only if $f(1)$ is equal to $+1$ or $-1$. 
Hence we have $c_1+c_2=\pm 1$. 
If $c_1+c_2=1$, then we have $f(x)=x^3+c_2x^2+(1-c_2)x-1$ and $f(x)$ is positive if and only if 
$c_2\leq 1$. 
If $c_1+c_2=-1$, then we have $f(x)=x^3+c_2x^2+(-1-c_2)x-1$ and $f(x)$ is positive if and only if 
$c_2\leq 0$. 
We thus obtain all (positive) Cappell-Shaneson polynomials of degree $3$. 
These polynomials play a key role in the study of Cappell-Shaneson homotopy $4$-spheres 
(cf. \cite{AR1984}, \cite{Gompf2010}, \cite{KY2023} and \cite{Iwaki2024}). 
\end{exmp}

We introduce a notion of exterior powers for monic polynomials. 

\begin{lem}\label{ex}
Let $K$ be a field. 
Let $f(x)$ be a monic polynomial of degree $n$ with coefficients in $K$, 
and $A$ a square matrix of order $n$ 
with entries in $K$ whose characteristic polynomial is equal to $f(x)$. 
For an integer $k$ which satisfies $1\leq k\leq n$, 
let $f^{\wedge k}(x)$ denote the characteristic polynomial of $\bigwedge^kA$. 
Then $f^{\wedge k}(x)$ does not depend on a choice of $A$. 
We call $f^{\wedge k}(x)$ the $k$-th {\rm exterior power} of $f(x)$. 
\end{lem}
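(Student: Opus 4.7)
The plan is to identify $f^{\wedge k}(x)$ with a symmetric function of the roots of $f(x)$, which then automatically depends only on $f$.

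First I would pass to an algebraic closure $\bar K$ of $K$ and let $\lambda_1,\dots,\lambda_n\in\bar K$ be the roots of $f(x)$, counted with multiplicity. Since $\bar K$ is algebraically closed, any $A\in M_n(K)$ with characteristic polynomial $f$ is conjugate over $\bar K$ to an upper-triangular matrix with diagonal entries $\lambda_1,\dots,\lambda_n$ in some order (this reduces, by induction on $n$, to the existence of an eigenvector). Fix such a triangularizing basis $e_1,\dots,e_n$, so that $Ae_j\in\mathrm{span}(e_1,\dots,e_j)$ with coefficient $\lambda_j$ on $e_j$. Since conjugation commutes with the exterior-power construction, $\bigwedge^k A$ is replaced by its conjugate in the induced basis of $\bigwedge^k\bar K^n$, and in particular its characteristic polynomial is unchanged.

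Second, I would order the basis $\{e_{i_1}\wedge\cdots\wedge e_{i_k}\}_{i_1<\cdots<i_k}$ lexicographically and check that $\bigwedge^k A$ is upper triangular in this basis, with diagonal entry $\lambda_{i_1}\cdots\lambda_{i_k}$ at the multi-index $\{i_1<\cdots<i_k\}$. The verification is purely combinatorial: expanding $Ae_{i_1}\wedge\cdots\wedge Ae_{i_k}$ produces wedge monomials whose index tuple $(l_1,\dots,l_k)$ satisfies $l_m\leq i_m$ for each $m$; after reordering into an increasing sequence $j_1<\cdots<j_k$ one still has $j_r\leq i_r$ for all $r$, so either $(j_1,\dots,j_k)=(i_1,\dots,i_k)$, giving the claimed diagonal term $\lambda_{i_1}\cdots\lambda_{i_k}\,e_{i_1}\wedge\cdots\wedge e_{i_k}$, or $(j_1,\dots,j_k)$ is strictly lex-smaller than $(i_1,\dots,i_k)$.

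Consequently
\[
  f^{\wedge k}(x)\;=\;\det\bigl(xI-\textstyle\bigwedge^k A\bigr)\;=\;\prod_{1\leq i_1<\cdots<i_k\leq n}\bigl(x-\lambda_{i_1}\cdots\lambda_{i_k}\bigr),
\]
which is manifestly a symmetric polynomial in $\lambda_1,\dots,\lambda_n$ with coefficients in $\mathbb{Z}[x]$. By the fundamental theorem of symmetric polynomials it can be written as a polynomial in the elementary symmetric functions $e_1(\lambda),\dots,e_n(\lambda)$ of the $\lambda_i$, which up to sign are the coefficients of $f(x)$. Hence $f^{\wedge k}(x)$ depends only on $f(x)$; and since $\det(xI-\bigwedge^k A)$ lies in $K[x]$ to begin with (the entries of $\bigwedge^k A$ lie in $K$), its coefficients are in fact in $K$, even though the derivation went through $\bar K$. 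There is no serious obstacle in this argument; the only point requiring care is the bookkeeping in the second step, namely verifying that $j_r\leq i_r$ for all $r$ forces either equality of the multi-indices or strict lexicographic inequality.
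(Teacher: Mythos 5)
Your proof is correct and follows essentially the same route as the paper's: both identify $f^{\wedge k}(x)$ with $\prod_{i_1<\cdots<i_k}(x-\lambda_{i_1}\cdots\lambda_{i_k})$ over $\overline{K}$ and then invoke the fundamental theorem of symmetric polynomials to express its coefficients in terms of those of $f(x)$. The only difference is that you also supply a proof (via triangularization and the lexicographic bookkeeping) of the fact that the eigenvalues of $\bigwedge^k A$ are the $k$-fold products of eigenvalues of $A$, a standard fact the paper simply assumes.
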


\begin{proof}
Let $\alpha_1,\ldots,\alpha_n$ be the roots of $f(x)$ in an algebraic closure $\overline{K}$ of $K$. 
The coefficient $s_{\ell}$ of the degree $\ell$ term of $f^{\wedge k}(x)$ 
is equal to the elementary symmetric polynomial of degree $\binom{n}{k}-\ell$ in the variables 
$S=\{\alpha_{i_1}\cdots\alpha_{i_k}\, |\, 1\leq i_1<\cdots <i_k\leq n\}$. 
Since the set $S$ is invariant under the permutations of 
$\alpha_1,\ldots,\alpha_n$, the coefficient $s_{\ell}$ is a symmetric polynomial 
in the variables $\alpha_1,\ldots,\alpha_n$. 
Hence $s_{\ell}$ can be expressed as a polynomial of the elementary symmetric polynomials 
in the variables $\alpha_1,\ldots,\alpha_n$. 
As a consequence, $s_{\ell}$ can be expressed as a polynomial of the coefficients of $f(x)$ 
because the coefficient of the degree $i$ term of $f(x)$ 
is equal to the elementary symmetric polynomial of degree $n-i$ in the variables 
$\alpha_1,\ldots,\alpha_n$. 
Therefore $f^{\wedge k}(x)$ is completely determined by $f(x)$ and $k$. 
\end{proof}

\begin{rem}
The coefficient $s_{\ell}$ of $f^{\wedge k}(x)$ considered in the proof of Lemma \ref{ex} 
can be written in terms of Grothendieck polynomials which play a key role in the theory of 
special $\lambda$-rings. More precisely, it is easily shown that the equality 
$s_{\ell}=(-1)^{N-\ell}P_{N-\ell,k}(c_{n-1},\ldots ,c_0)$ holds, where $N=\binom{n}{k}$, 
$c_i$ is the coefficient of the degree $i$ term of $f(x)$, and $P_{n,m}$ is the Grothendieck polynomial 
(the universal polynomial) defined as in \cite{AT1969}. 
See also \cite{Grothendieck1958}, \cite{Knutson1973}, and 
\cite{Yau2010}. 
\end{rem}

We show that a square matrix which shares the characteristic polynomial with a Cappell-Shaneson 
matrix is also a Cappell-Shaneson matrix. 

\begin{cor}\label{CSCS}
Let $f(x)$ be a monic polynomial of degree $n$ with integer coefficients, 
and $A$ and $B$ square matrices of order $n$ with integer entries. 
Suppose that $f(x)$ is the characteristic polynomial of both of $A$ and $B$. 
For every  integer $k$ which satisfies $1\leq k\leq [n/2]$, 
$A$ satisfies the condition $\mathrm{CS}_k$ if and only if $B$ does. 
Consequently, $A$ is a Cappell-Shaneson matrix if and only if $B$ is. 
\end{cor}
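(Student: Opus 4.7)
The plan is to reduce the condition $\mathrm{CS}_k$ to an evaluation of the exterior power polynomial $f^{\wedge k}(x)$ at a single point, and then invoke Lemma \ref{ex} to conclude the value depends only on $f$.

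First I would observe the elementary identity: for any square matrix $M$ of order $N$ with characteristic polynomial $g(x)=\det(xI-M)$, one has $g(1)=\det(I-M)$. Applying this with $M=\bigwedge^kA$ (of order $N=\binom{n}{k}$) gives
\[
\det\bigl(I-\textstyle\bigwedge^kA\bigr) \;=\; f^{\wedge k}(1),
\]
and similarly for $B$ in place of $A$. So the quantity appearing in $\mathrm{CS}_k$ is nothing but the value at $1$ of the $k$-th exterior power of the characteristic polynomial.

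Next I would apply Lemma \ref{ex} directly: since $A$ and $B$ have the same characteristic polynomial $f(x)$ (over $\mathbb{Q}$, which is a field), the polynomial $f^{\wedge k}(x)$ is independent of the choice of representing matrix. Therefore $f^{\wedge k}(1)$ is the same integer whether computed from $A$ or from $B$, and in particular it equals $+1$ or $-1$ for $A$ if and only if it does for $B$. This gives the equivalence of $\mathrm{CS}_k$ for $A$ and $B$ for each $k$ in $\{1,\ldots,[n/2]\}$, and taking the conjunction over all such $k$ yields the final statement that $A$ is a Cappell-Shaneson matrix if and only if $B$ is.

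There is essentially no obstacle here: the corollary is a formal consequence of Lemma \ref{ex} once one recognizes $\det(I-\bigwedge^kA)$ as an evaluation of the characteristic polynomial of $\bigwedge^kA$. The only point to be careful about is that the definition of Cappell-Shaneson matrix already requires $A\in\mathrm{SL}(n,\mathbb{Z})$, which for $B$ follows automatically from the hypothesis that $B$ has integer entries and shares $f(x)$ with $A$ (the constant term $(-1)^nf(0)=\pm 1$ forces $\det B=\pm 1$, and in fact $\det B=\det A=1$).
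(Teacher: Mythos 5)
Your proposal is correct and is essentially identical to the paper's own proof: both reduce $\mathrm{CS}_k$ to the statement $f^{\wedge k}(1)=\pm 1$ via $\det(I-\bigwedge^kA)=f^{\wedge k}(1)$ and then invoke Lemma \ref{ex} to see that this depends only on $f(x)$ and $k$. The extra remark about $\det B$ being forced by the shared characteristic polynomial is a harmless addition not needed by the paper.
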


\begin{proof}
The matrix $A$ satisfies the condition $\mathrm{CS}_k$ if and only if the equality
{\cb
$f^{\wedge k}(1)=\pm 1$
}
holds.
The latter is completely determined by $f(x)$ and $k$ by Lemma \ref{ex}. 
\end{proof}

For every Cappell-Shaneson polynomial $f(x)=x^n+c_{n-1}x^{n-1}+\cdots +c_1x+c_0$ of degree $n$, 
the companion matrix 
\[
A
=
\left(
\begin{array}{cccccc}
0 & 1 & 0 & \cdots & \cdots & 0 \\
0 & 0 & 1 & \ddots &  & \vdots \\
\vdots &  & \ddots & \ddots & \ddots & \vdots \\
\vdots &  &  & \ddots & 1 & 0 \\
0 & \cdots & \cdots & \cdots & 0 & 1 \\
-c_0 & -c_1 & -c_2 & \cdots & -c_{n-2} & -c_{n-1}
\end{array}
\right)
\]
of $f(x)$ is a Cappell-Shaneson matrix of order $n$ because of Corollary \ref{CSCS}. 
If $f(x)$ is positive, then $A$ is also positive. 

\subsection{Cappell-Shaneson polynomials and regularity}

We introduce a notion of regularity for polynomials with coefficients in a field. 

\begin{defn}
Let $K$ be a field and $\overline{K}$ an algebraic closure of $K$. 
We consider a monic polynomial $f(x)$ of degree $n$ in $K[x]$ and 
its roots $\alpha_1,\ldots ,\alpha_n$ in $\overline{K}$. 
(A root of $f(x)$ with multiplicity $m$ appears exactly $m$ times in $\alpha_1,\ldots ,\alpha_n$.) 
Let $k$ be an integer which satisfies $1\leq k\leq [n/2]$. 
We say that $f(x)$ is {\it regular} at degree $k$ (or $k$-{\it regular} for short) over $K$ 
if $\alpha_{i_1}\cdots \alpha_{i_k}\ne 1$ for every $k$-tuple $(i_1,\ldots ,i_k)$ of integers 
with $1\leq i_1<\cdots <i_k\leq n$. 
It is clear that $f(x)$ is $1$-regular over $K$ if and only if it satisfies $f(1)\ne 0$. 
We say that $f(x)$ is {\it regular} over $K$ if $f(x)$ is $k$-regular for every integer $k$ 
which satisfies $1\leq k\leq [n/2]$. 
We say that $f(x)$ is {\it doubly monic} if the constant term of $f(x)$ is equal to $(-1)^n$. 
\end{defn}

A square matrix $A$ of order $n$ with integer entries belongs to $\mathrm{SL}(n,\mathbb{Z})$ 
if and only if the characteristic polynomial of $A$ is doubly monic. 

The condition $\mathrm{CS}_k$ on a square matrix of order $n$ 
in Definition \ref{CSm} implies the $k$-regularity of its characteristic polynomial. 

\begin{lem}\label{CS-regular}
Let $k$ be an integer which satisfies $1\leq k\leq [n/2]$. 
If a square matrix $A$ of order $n$ with integer entries satisfies the condition $\mathrm{CS}_k$, 
then the characteristic polynomial $f(x)$ of $A$ is $k$-regular over $\mathbb{Q}$. 
\end{lem}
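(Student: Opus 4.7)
The proof should be essentially a direct unwinding of definitions combined with the standard fact about eigenvalues of exterior powers. The plan is as follows.

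First I would recall that if $\alpha_1,\ldots,\alpha_n$ are the eigenvalues of $A$ in $\overline{\mathbb{Q}}$ (listed with multiplicity), then the eigenvalues of $\bigwedge^k A$ are precisely the $\binom{n}{k}$ products $\alpha_{i_1}\cdots\alpha_{i_k}$ for $1\le i_1<\cdots<i_k\le n$. This is the classical description of the spectrum of an exterior power, and it can either be quoted directly or derived by passing to an upper-triangular (Jordan) form over $\overline{\mathbb{Q}}$, where the diagonal entries of $\bigwedge^k A$ in the induced basis are exactly these products.

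Next, I would express the CS$_k$ quantity in terms of these roots. Using Lemma \ref{ex}, the characteristic polynomial of $\bigwedge^k A$ is the exterior power $f^{\wedge k}(x)$, and by the eigenvalue description above it factors as
\[
f^{\wedge k}(x)=\prod_{1\le i_1<\cdots<i_k\le n}\bigl(x-\alpha_{i_1}\cdots\alpha_{i_k}\bigr).
\]
Evaluating at $x=1$ gives
\[
\det\bigl(I-\textstyle\bigwedge^k A\bigr)=f^{\wedge k}(1)=\prod_{1\le i_1<\cdots<i_k\le n}\bigl(1-\alpha_{i_1}\cdots\alpha_{i_k}\bigr).
\]

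Finally, the hypothesis $\mathrm{CS}_k$ says this product equals $\pm 1$; in particular it is nonzero, so every factor $1-\alpha_{i_1}\cdots\alpha_{i_k}$ is nonzero. This is exactly the statement that $f(x)$ is $k$-regular over $\mathbb{Q}$.

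There is no real obstacle here: the only ingredient beyond unwinding definitions is the eigenvalue description of $\bigwedge^k A$, which is standard. If a self-contained argument is preferred, one can avoid invoking it by noting that $f^{\wedge k}(1)\in\mathbb{Z}$ by definition and that its vanishing is equivalent, over $\overline{\mathbb{Q}}$, to some product of $k$ roots being $1$; the CS$_k$ condition forces $f^{\wedge k}(1)=\pm 1\ne 0$, from which the conclusion follows.
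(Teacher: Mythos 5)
Your proposal is correct and follows essentially the same route as the paper: the condition $\mathrm{CS}_k$ forces $\det(I-\bigwedge^kA)\ne 0$, hence no eigenvalue of $\bigwedge^kA$ equals $1$, and the eigenvalues of $\bigwedge^kA$ are exactly the products $\alpha_{i_1}\cdots\alpha_{i_k}$, which is precisely $k$-regularity. Your version merely makes the intermediate identity $\det(I-\bigwedge^kA)=\prod(1-\alpha_{i_1}\cdots\alpha_{i_k})$ explicit, which the paper leaves implicit.
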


\begin{proof}
Let $\alpha_1,\ldots ,\alpha_n$ be the roots of $f(x)$ in $\overline{\mathbb{Q}}$. 
The condition $\mathrm{CS}_k$ clearly implies 
that any eigenvalue of $\bigwedge^kA$ is not equal to one. 
{\cb The latter condition  } is equivalent to the $k$-regularity of $f(x)$
because the set of eigenvalues of $\bigwedge^kA$ is equal to 
the set of products $\alpha_{i_1}\cdots \alpha_{i_k}$ 
for all $k$-tuples $(i_1,\ldots ,i_k)$ of integers with $1\leq i_1<\cdots <i_k\leq n$. 
\end{proof}

\begin{rem}
The converse of Lemma \ref{CS-regular} is not true because the value of $\det(I-\bigwedge^kA)$ 
need not be $0$, $1$, or $-1$. Compare with Proposition \ref{CS-regular_p}. 
\end{rem}


We describe a sufficient condition for a polynomial with integer coefficients to be a Cappell-Shaneson 
polynomial. 

\begin{prop}
Let $f(x)$ be an irreducible polynomial of degree $n$ with integer coefficients. 
If the Galois group of $f(x)$ is isomorphic to the symmetric group $S_n$, 
then $f(x)$ is regular over $\mathbb{Q}$. 
\end{prop}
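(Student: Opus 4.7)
The plan is to argue by contradiction, exploiting the transitivity of the $S_n$-action on $k$-subsets of roots. Fix an integer $k$ with $1\leq k\leq [n/2]$ and suppose, toward a contradiction, that $f(x)$ is not $k$-regular, so that there exist indices $1\leq i_1<\cdots <i_k\leq n$ for which $\alpha_{i_1}\cdots\alpha_{i_k}=1$ in $\overline{\mathbb{Q}}$.

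The first key step is to spread this single relation over \emph{every} $k$-subset of the roots. Any $\sigma\in\mathrm{Gal}(f/\mathbb{Q})$ acts as a permutation $\pi_\sigma$ on $\{\alpha_1,\ldots,\alpha_n\}$, and applying $\sigma$ to the relation gives $\alpha_{\pi_\sigma(i_1)}\cdots\alpha_{\pi_\sigma(i_k)}=1$. Since the Galois group is all of $S_n$, the induced action on $k$-element subsets is transitive, so I obtain
\[
\prod_{i\in S}\alpha_i=1 \quad\text{for every $k$-subset $S\subseteq\{1,\ldots,n\}$.}
\]

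The second step extracts a contradiction by comparing two such products differing in a single index. Because $k\leq [n/2]\leq n-1$, I can choose $k$-subsets $S_1,S_2$ with $|S_1\cap S_2|=k-1$, say $S_1\setminus S_2=\{a\}$ and $S_2\setminus S_1=\{b\}$ with $a\ne b$. Dividing the corresponding relations yields $\alpha_a/\alpha_b=1$, hence $\alpha_a=\alpha_b$. On the other hand, $f(x)$ is irreducible in $\mathbb{Q}[x]$ and $\mathbb{Q}$ has characteristic zero, so $f(x)$ is separable and its roots $\alpha_1,\ldots,\alpha_n$ are pairwise distinct. This contradicts $\alpha_a=\alpha_b$ and completes the argument.

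The substance of the proof really lies in the transitivity observation; the index/indexing step at the end is elementary. The only minor thing to verify is that the range $1\leq k\leq [n/2]$ guarantees both that distinct $k$-subsets exist (i.e.\ $k<n$) and that one can pick two of them meeting in $k-1$ elements, which is automatic once $n\geq 2$. No assumption beyond irreducibility over $\mathbb{Q}$ (hence automatic separability) and the $S_n$-Galois hypothesis is needed, so I do not anticipate any genuine obstacle.
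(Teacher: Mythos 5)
Your proof is correct, but it follows a genuinely different route from the paper's. The paper builds the tower $\mathbb{Q}=K_0\subset K_1\subset\cdots\subset K_n$ with $K_i=\mathbb{Q}(\alpha_1,\ldots,\alpha_i)$, uses $|{\rm Gal}(f/\mathbb{Q})|=n!$ to force $[K_i:K_{i-1}]=n-i+1$, and concludes that for $i\leq n-1$ one has $\alpha_i\notin K_{i-1}$, so $\alpha_1\cdots\alpha_i\neq 1$; permuting the roots then gives $k$-regularity for all $k\leq [n/2]$. You instead propagate a single hypothetical relation $\alpha_{i_1}\cdots\alpha_{i_k}=1$ to \emph{all} $k$-subsets via the transitivity of the $S_n$-action on $k$-subsets, and then cancel two relations differing in one index to force two roots to coincide, contradicting separability. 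Both arguments are complete (your side conditions check out: $k\leq[n/2]\leq n-1$ guarantees two $k$-subsets meeting in $k-1$ elements, and the common factor $\prod_{i\in T}\alpha_i$ is nonzero because it divides a product equal to $1$). The trade-off: the paper's degree-counting genuinely uses $|G|=n!$ and yields the slightly stronger intermediate fact $\alpha_i\notin K_{i-1}$, whereas your argument only uses that the Galois group acts transitively on $k$-subsets for each $k\leq[n/2]$, so it applies verbatim to other highly homogeneous Galois groups (e.g.\ $A_n$), making it marginally more general and arguably more elementary.
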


\begin{proof}
Let $\alpha_1,\ldots ,\alpha_n$ be the roots of $f(x)$ in $\overline{\mathbb{Q}}$. 
For each integer $i$ in $\{1,\ldots ,n\}$, the field generated by $\alpha_1,\ldots ,\alpha_i$ 
over $\mathbb{Q}$ is denoted by $K_i$. We obtain the sequence 
$\mathbb{Q}=K_0\subset K_1\subset\cdots\subset K_{n-1}\subset K_n$ of field extensions. 
Since $f(x)$ is irreducible over $\mathbb{Q}$, the degree of the extension 
$K_1/K_0$ is equal to $n$. Let $i$ be an integer in $\{1,\ldots ,n\}$. 
Since $f(x)$ is separable, there exists an element $g(x)$ of $K_{i-1}[x]$ such that 
the equalities $f(x)=(x-\alpha_1)\cdots (x-\alpha_{i-1})g(x)$ and $g(\alpha_i)=0$ hold. 
Hence the degree of the extension $K_i/K_{i-1}$ is less than or equal to 
that of $g(x)$, which is equal to $n-i+1$. Since the degree of the extension $K_n/K_0$ 
is equal to the order of the Galois group of $f(x)$, which is equal to $n!$, we conclude that 
the degree of the extension $K_i/K_{i-1}$ is equal to $n-i+1$. 
In particular, $K_i$ is not equal to $K_{i-1}$ and hence 
$\alpha_1\cdots\alpha_{i-1}\alpha_i\ne 1$. 
The same argument for all permutations of $\alpha_1,\ldots ,\alpha_n$ implies that 
$f(x)$ is $k$-regular over $\mathbb{Q}$ for every integer $k$ in $\{1,\ldots ,[n/2]\}$. 
\end{proof}



\subsection{Signed reciprocal polynomial}

We end this section with the definition and properties of a variation of reciprocal polynomial. 

\begin{defn}
Let $K$ be a field and $f(x)$ a doubly monic polynomial of degree $n$ with coefficients in $K$. 
The doubly monic polynomial $f^*(x)$ defined by $f^*(t)=(-1)^nt^nf(t^{-1})$ is called the 
{\it signed reciprocal polynomial} of $f(x)$. 
If $f(x)$ is the characteristic polynomial of a square matrix $A$ with entries in $K$, 
then $f^*(x)$ is the characteristic polynomial of $A^{-1}$. 
\end{defn}

\begin{prop}\label{reg*}
Let $K$ be a field and $f(x)$ a doubly monic polynomial of degree $n$ with coefficients in $K$. 
Let $k$ be an integer which satisfies $1\leq k\leq [n/2]$. 
Then $f(x)$ is $k$-regular over $K$ if and only if $f^*(x)$ is $k$-regular over $K$. 
\end{prop}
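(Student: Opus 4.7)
The plan is to reduce the statement to a direct comparison of the multisets of roots in $\overline{K}$. Let $\alpha_1,\ldots,\alpha_n\in\overline{K}$ be the roots of $f(x)$ listed with multiplicity. Because $f(x)$ is doubly monic, its constant term $(-1)^n\alpha_1\cdots\alpha_n$ equals $(-1)^n$, so $\alpha_1\cdots\alpha_n=1$; in particular every $\alpha_i$ is nonzero, so the inverses $\alpha_i^{-1}$ make sense.

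The key computation is to identify the roots of $f^*(x)$. Starting from $f(x)=\prod_{i=1}^n(x-\alpha_i)$, I would expand
\[
f^*(t)=(-1)^n t^n f(t^{-1})=(-1)^n\prod_{i=1}^n(1-\alpha_i t)=\prod_{i=1}^n(\alpha_i t-1),
\]
and then, using $\alpha_1\cdots\alpha_n=1$, rewrite this as $\prod_{i=1}^n(t-\alpha_i^{-1})$. Thus the roots of $f^*(x)$ are exactly $\alpha_1^{-1},\ldots,\alpha_n^{-1}$ (with matching multiplicities).

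With that in hand the equivalence is essentially tautological. For any $k$-tuple $1\le i_1<\cdots<i_k\le n$,
\[
\alpha_{i_1}^{-1}\cdots\alpha_{i_k}^{-1}=1\iff\alpha_{i_1}\cdots\alpha_{i_k}=1,
\]
simply by taking reciprocals. Therefore no $k$-subproduct of the $\alpha_i$'s equals $1$ if and only if no $k$-subproduct of the $\alpha_i^{-1}$'s equals $1$, which by definition is $k$-regularity of $f$ versus $k$-regularity of $f^*$.

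There is no substantial obstacle; the only place to be careful is verifying that $f^*$ is itself doubly monic and that the normalization $\prod\alpha_i=1$ is used correctly to pass from $\prod(\alpha_i t-1)$ to $\prod(t-\alpha_i^{-1})$. The hypothesis $1\le k\le[n/2]$ plays no role in the argument and the conclusion in fact holds for all $1\le k\le n-1$.
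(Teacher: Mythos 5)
Your proof is correct and follows essentially the same route as the paper: both identify the roots of $f^*(x)$ as $\alpha_1^{-1},\ldots,\alpha_n^{-1}$ and then observe that a $k$-subproduct of the $\alpha_i$ equals $1$ exactly when the corresponding subproduct of inverses does. The only difference is that you spell out the factorization $f^*(t)=\prod(t-\alpha_i^{-1})$ via $\prod\alpha_i=1$, a verification the paper leaves implicit.
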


\begin{proof}
Let $\alpha_1,\ldots ,\alpha_n$ be the roots of $f(x)$ in $\overline{K}$. 
Since $f(x)$ is doubly monic, $\alpha_i$ is not equal to $0$ for every $i\in\{1,\ldots ,n\}$. 
By the definition of signed reciprocal polynomial, the roots of $f^*(x)$ is equal to 
$\alpha_1^{-1},\ldots ,\alpha_n^{-1}$. 
For every $k$-tuple $(i_1,\ldots ,i_k)$ of integers with $1\leq i_1<\cdots <i_k\leq n$, the equality 
$\alpha_{i_1}\cdots \alpha_{i_k}=1$ holds if and only if the equality 
$\alpha_{i_1}^{-1}\cdots \alpha_{i_k}^{-1}=1$ holds. Therefore 
$f(x)$ is $k$-regular over $K$ if and only if $f^*(x)$ is $k$-regular over $K$. 
\end{proof}

We now interpret the $k$-regularity of a doubly monic polynomial as conditions of 
its signed reciprocal polynomial and exterior powers when $k$ is equal to $2$ or $3$. 

\begin{thm}\label{2-regular}
Let $K$ be a field and $\overline{K}$ an algebraic closure of $K$. 
Let $f(x)$ be a $1$-regular doubly monic polynomial of degree $n$ with coefficients in $K$. 
If $n$ is greater than $3$, then 
$f(x)$ is $2$-regular over $K$ if and only if there is no polynomial $g(x)$ of degree $2$ 
with coefficients in $\overline{K}$ which divides both of $f(x)$ and $f^*(x)$. 
\end{thm}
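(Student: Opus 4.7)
The plan is to translate each side of the equivalence into a statement about the multiset of roots of $f$ in $\overline{K}$, using the fact that the roots of $f^*$ are precisely the reciprocals of the roots of $f$. Write $\alpha_1,\ldots,\alpha_n$ for the roots of $f$ in $\overline{K}$ with multiplicity. Double monicity gives $\alpha_1\cdots\alpha_n=1$, so every $\alpha_i$ is nonzero, and the roots of $f^*$ are $\alpha_1^{-1},\ldots,\alpha_n^{-1}$ with the same multiplicities.

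For the direction ($\Rightarrow$), argued by contraposition, take indices $i<j$ with $\alpha_i\alpha_j=1$ and set $g(x)=(x-\alpha_i)(x-\alpha_j)$. Clearly $g\mid f$ in $\overline{K}[x]$, and $g\mid f^*$ because $\alpha_i$ and $\alpha_j$ are the reciprocals of the roots $\alpha_j,\alpha_i$ of $f$. The only subtle case is $\alpha_i=\alpha_j$, which forces $\alpha_i^2=1$; the $1$-regularity of $f$ excludes $\alpha_i=1$, so $\alpha_i=-1$, and then $-1$ has multiplicity at least $2$ in both $f$ and $f^*$, making $g(x)=(x+1)^2$ still a common divisor.

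For the direction ($\Leftarrow$), again by contraposition, suppose a monic degree-$2$ polynomial $g\in\overline{K}[x]$ divides both $f$ and $f^*$, and write its roots in $\overline{K}$ as $\beta,\gamma$ (possibly equal). Using $g\mid f$, pick distinct indices $i,i'$ with $\{\alpha_i,\alpha_{i'}\}=\{\beta,\gamma\}$ as multisets; using $g\mid f^*$, pick distinct indices $j,j'$ with $\{\alpha_j^{-1},\alpha_{j'}^{-1}\}=\{\beta,\gamma\}$. After relabeling so that $\alpha_i=\alpha_j^{-1}$, the identity $\alpha_i\alpha_j=1$ with $i\ne j$ already yields the desired pair witnessing non-$2$-regularity. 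The remaining case $i=j$ forces $\alpha_i^2=1$, hence $\alpha_i=-1$ by $1$-regularity; then the second matching $\alpha_{i'}\alpha_{j'}=1$ either gives $i'\ne j'$ directly, or forces $\alpha_{i'}=-1$, in which subcase the pair $i\ne i'$ itself satisfies $\alpha_i\alpha_{i'}=1$.

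The main obstacle is the bookkeeping in the self-reciprocal cases where some $\alpha_m$ is a fixed point of inversion, i.e.\ $\alpha_m=\pm 1$: the $1$-regularity of $f$ removes the fixed point $1$, leaving only $-1$, which is handled by producing a second index carrying the same value. Beyond this case analysis the argument is purely formal, and the hypothesis $n>3$ is used only to place $k=2$ within the range $1\le k\le [n/2]$ appearing in the definition of $k$-regularity.
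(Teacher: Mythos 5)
Your proof is correct and follows essentially the same route as the paper's: pass to the multiset of roots, use that the roots of $f^*$ are the reciprocals of those of $f$, and handle the fixed point of inversion by noting that $1$-regularity forces it to be $-1$. Your explicit treatment of the repeated-root case $\alpha_i=\alpha_j$ via multiplicities is a minor refinement of a point the paper glosses over, but it is not a different argument.
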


\begin{proof}
Let $\alpha_1,\ldots ,\alpha_n$ be the roots of $f(x)$ in $\overline{K}$. 
Since $f(x)$ is doubly monic and $1$-regular over $K$, 
$\alpha_i$ is equal to neither $0$ nor $1$ for every $i\in\{1,\ldots ,n\}$. 
It is easily seen that the roots of $f^*(x)$ are $\alpha_1^{-1},\ldots ,\alpha_n^{-1}$. 

Suppose that $f(x)$ is not $2$-regular over $K$. 
There exist distinct integers $i,j$ in $\{1,\ldots ,n\}$ which satisfy $\alpha_i\alpha_j=1$. 
Both of $\alpha_i=\alpha_j^{-1}$ and $\alpha_j=\alpha_i^{-1}$ are common roots of 
$f(x)$ and $f^*(x)$. Since $i$ is not equal to $j$, $f(x)$ and $f^*(x)$ have the common divisor 
$g(x)=(x-\alpha_i)(x-\alpha_j)$. 

Suppose that $f(x)$ and $f^*(x)$ have a common divisor $g(x)$ of degree $2$ with coefficients 
in $\overline{K}$. There exist elements $a,\alpha,\beta$ of $\overline{K}$ 
which satisfy $g(x)=a(x-\alpha)(x-\beta)$. 
Since both $\alpha$ and $\beta$ are roots of $f(x)$, there exist distinct integers $i,j$ in $\{1,\ldots ,n\}$ 
which satisfy $\alpha_i=\alpha$ and $\alpha_j=\beta$. 
Since both $\alpha$ and $\beta$ are roots of $f^*(x)$, there exist distinct integers $k,\ell$ in $\{1,\ldots ,n\}$ 
which satisfy $\alpha_k^{-1}=\alpha$ and $\alpha_{\ell}^{-1}=\beta$. 
Thus we have $\alpha_i\alpha_k=1$ and $\alpha_j\alpha_{\ell}=1$, 
either of which implies that $f(x)$ is not $2$-regular if $i\ne k$ or $j\ne \ell$. 
If $i=k$ and $j=\ell$, we have $\alpha_i^2=\alpha_j^2=1$, 
and hence $\alpha_i=\alpha_j=-1$ and the characteristic of $K$ is not equal to $2$. 
It also implies that $f(x)$ is not $2$-regular. 
\end{proof}

\begin{prop}\label{3-regular}
Let $K$ be a field and 
$f(x)$ a doubly monic polynomial of degree $n$ with coefficients in $K$. 
If $f(x)$ is separable and $n$ is greater than $5$, 
then $f(x)$ is $3$-regular over $K$ if and only if there is no common root of 
$f^{\wedge 2}(x)$ and $f^*(x)$. 
If $f(x)$ is separable and $n$ is equal to $6$, 
then $f(x)$ is $3$-regular over $K$ if and only if 
$f^{\wedge 2}(x)$ is not
{\cb
divisible }
by $f^*(x)$.
\end{prop}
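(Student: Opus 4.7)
The plan is to translate both conditions into statements about the roots $\alpha_1,\ldots,\alpha_n$ of $f(x)$ in $\overline{K}$: by Lemma \ref{ex} the roots of $f^{\wedge 2}(x)$ are the products $\alpha_i\alpha_j$ for $1\le i<j\le n$, while the roots of $f^*(x)$ are the reciprocals $\alpha_k^{-1}$. A common root of $f^{\wedge 2}$ and $f^*$ thus records precisely an equation $\alpha_i\alpha_j\alpha_k=1$ with $i<j$ and some $k$, while $3$-regularity demands that no such equation hold with $i,j,k$ pairwise distinct.

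The implication ``not $3$-regular $\Rightarrow$ common root'' is immediate: any witness $\alpha_i\alpha_j\alpha_k=1$ with $i<j<k$ makes $\alpha_k^{-1}=\alpha_i\alpha_j$ a common root. For the sharper $n=6$ conclusion I would add the doubly monic identity $\alpha_1\cdots\alpha_6=1$, which forces the complementary triple $\alpha_l\alpha_m\alpha_p=1$ with $\{l,m,p\}=\{1,\ldots,6\}\setminus\{i,j,k\}$; together the two triples express every $\alpha_r^{-1}$ as some product $\alpha_s\alpha_t$, so every root of $f^*$ occurs among the roots of $f^{\wedge 2}$, and separability of $f$ upgrades this set-theoretic inclusion to the divisibility $f^*\mid f^{\wedge 2}$.

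For the converse, consider a common root $\gamma=\alpha_i\alpha_j=\alpha_k^{-1}$. The case $k\notin\{i,j\}$ immediately produces three distinct indices whose product of corresponding roots is $1$; under the stronger ``no common root'' hypothesis for $n>5$ there is nothing else to handle. The ``parasitic'' case $k\in\{i,j\}$ (say $k=i$) yields only the relation $\alpha_j=\alpha_i^{-2}$, and bootstrapping this into an honest $3$-regularity failure is the main obstacle. My plan for the $n=6$ refinement is to iterate: the hypothesis $f^*\mid f^{\wedge 2}$ assigns to every $k$ an index $j_k\ne k$ with $\alpha_{j_k}=\alpha_k^{-2}$, giving a self-map of $\{1,\ldots,6\}$ whose orbits must be either a single $6$-cycle or two $3$-cycles (separability rules out $1$- and $2$-cycles). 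Each $p$-cycle forces $\alpha^{(-2)^p-1}=1$ for its generator $\alpha$, and combining with $\prod_r\alpha_r=1$ pins the order of $\alpha$ to a divisor of $63$ (for a $6$-cycle) or $9$ (for a $3$-cycle). The final step is a direct exponent calculation in that cyclic group producing three distinct orbit positions whose exponents sum to zero; this finite but delicate modular bookkeeping is the crux of the argument, and everything else is essentially linear algebra on the root side.
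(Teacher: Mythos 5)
Your forward direction (a failure of $3$-regularity yields a common root, and for $n=6$ the complementary triple $\alpha_{\sigma(4)}\alpha_{\sigma(5)}\alpha_{\sigma(6)}=1$ coming from $\prod_i\alpha_i=1$ puts all six roots of $f^*$ among the roots of $f^{\wedge 2}$, whence divisibility by separability) is exactly the paper's argument and is fine. The converse is where your proposal breaks down. The sentence ``under the stronger `no common root' hypothesis for $n>5$ there is nothing else to handle'' is not an argument: to prove that a common root forces a failure of $3$-regularity you must dispose of the parasitic case $k\in\{i,j\}$, and this case genuinely occurs. For instance, a separable doubly monic degree-$7$ polynomial over $\mathbb{Q}$ with roots $2,\;1/4,\;3,\;5,\;7,\;11,\;2/1155$ is $3$-regular (check $2$-adic valuations), yet $\alpha_1\alpha_2=1/2=\alpha_1^{-1}$ is a common root of $f^{\wedge 2}$ and $f^*$. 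So for $n>6$ no amount of bookkeeping will rescue the claimed equivalence from the parasitic case; you have correctly located the soft spot (which, for what it is worth, the paper's own proof also passes over, by asserting without justification that separability makes $i_1,i_2,i_3$ distinct), but you have not closed it, and as stated it cannot be closed.

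For $n=6$ your plan is also incomplete as written. The assignment $k\mapsto j_k$ with $\alpha_{j_k}=\alpha_k^{-2}$ involves choices and need not be injective: $j_k=j_{k'}$ only forces $\alpha_{k'}=-\alpha_k$, which separability permits. Hence the functional graph of $k\mapsto j_k$ need not decompose into a $6$-cycle or two $3$-cycles (it can be a short cycle with trees attached), and the subsequent order computation ($\alpha^{63}=1$ or $\alpha^9=1$) does not apply to all configurations. Even in the honest cycle cases, the ``direct exponent calculation producing three orbit positions whose exponents sum to zero'' is precisely the content of the converse, and you explicitly defer it. In short: the easy implication is done, the hard implication is not, and the one concrete mechanism you propose for it contains an unjustified structural claim.
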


\begin{proof}
Let $\alpha_1,\ldots,\alpha_n$ be the roots of $f(x)$ in an algebraic closure $\overline{K}$ of $K$. 
Since $f(x)$ is doubly monic, $\alpha_i$ is not equal to $0$ for every $i\in\{1,\ldots ,n\}$. 

We first assume that $n$ is greater than $5$. 

Suppose that $f(x)$ is not $3$-regular over $K$. 
There exist integers $i_1$, $i_2$, $i_3$ which satisfy $1\leq i_1<i_2<i_3\leq n$ 
and $\alpha_{i_1}\alpha_{i_2}\alpha_{i_3}=1$. 
Then $\alpha_{i_1}\alpha_{i_2}=\alpha_{i_3}^{-1}$ is a common root of $f^{\wedge 2}(x)$ and $f^*(x)$. 
Suppose that there exists a common root $\alpha$ of $f^{\wedge 2}(x)$ and $f^*(x)$. 
There exist integers $i_1$, $i_2$, $i_3$ which satisfy $1\leq i_1<i_2\leq n$, $1\leq i_3\leq n$, 
$\alpha=\alpha_{i_1}\alpha_{i_2}$, and $\alpha=\alpha_{i_3}^{-1}$. 
We have $\alpha_{i_1}\alpha_{i_2}\alpha_{i_3}=1$. 
Since $f(x)$ is separable, $\alpha_{i_1}$, $\alpha_{i_2}$, $\alpha_{i_3}$ are distinct and hence 
$i_1$, $i_2$, $i_3$ are also distinct. Therefore $f(x)$ is not $3$-regular over $K$. 

We next assume that $n$ is equal to $6$. 
Suppose that $f(x)$ is not $3$-regular over $K$. 
There exists a permutation $\sigma$ of $\{1,\ldots ,6\}$ which satisfies 
$\alpha_{\sigma(1)}\alpha_{\sigma(2)}\alpha_{\sigma(3)}=1$. 
Since $f(x)$ is doubly monic, we have the equality 
$\alpha_1\alpha_2\alpha_3\alpha_4\alpha_5\alpha_6=1$. 
Hence we also have $\alpha_{\sigma(4)}\alpha_{\sigma(5)}\alpha_{\sigma(6)}=1$. 
Since $f(x)$ is separable, $\alpha_{\sigma(2)}\alpha_{\sigma(3)}=\alpha_{\sigma(1)}^{-1}$, 
$\alpha_{\sigma(3)}\alpha_{\sigma(1)}=\alpha_{\sigma(2)}^{-1}$, 
$\alpha_{\sigma(1)}\alpha_{\sigma(2)}=\alpha_{\sigma(3)}^{-1}$, 
$\alpha_{\sigma(5)}\alpha_{\sigma(6)}=\alpha_{\sigma(4)}^{-1}$, 
$\alpha_{\sigma(6)}\alpha_{\sigma(4)}=\alpha_{\sigma(5)}^{-1}$, 
$\alpha_{\sigma(4)}\alpha_{\sigma(6)}=\alpha_{\sigma(6)}^{-1}$ are distinct common roots of 
$f^{\wedge 2}(x)$ and $f^*(x)$. It implies that $f^{\wedge 2}(x)$ is divided by $f^*(x)$. 
Suppose that $f^{\wedge 2}(x)$ is divided by $f^*(x)$. 
There exists a common root of $f^{\wedge 2}(x)$ and $f^*(x)$ in $\overline{K}$. 
By the same argument as above, $f(x)$ is not $3$-regular over $K$. 
\end{proof}


\section{Reduction modulo primes}


Let $p$ be a prime number and $\mathbb{F}_p$ the prime field of order $p$. 
In this section we study the regularity of polynomials with coefficients in $\mathbb{F}_p$ 
and its relation with that of polynomials with integer coefficients. 
We assume that $n$ is an integer greater than one. 

\subsection{Regularity over $\boldsymbol{\mathbb{F}_p}$}

We first describe a sufficient condition for a polynomial with coefficients in $\mathbb{F}_p$ to be regular. 

\begin{prop}\label{reg_p}
Let $p$ be a prime number and $f(x)$ a polynomial of degree $n$ with coefficients in $\mathbb{F}_p$. 
If $f(x)$ is irreducible
{\cb and primitive},
then $f(x)$ is regular over $\mathbb{F}_p$.
\end{prop}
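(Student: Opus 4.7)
The plan is to exploit the Galois/Frobenius structure of the roots of an irreducible polynomial over $\mathbb{F}_p$ together with the arithmetic meaning of primitivity.

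First I would fix a root $\alpha \in \overline{\mathbb{F}_p}$ of $f(x)$. Since $f$ is irreducible of degree $n$ over the perfect field $\mathbb{F}_p$, it is automatically separable, and its full set of roots is the Frobenius orbit $\alpha, \alpha^p, \alpha^{p^2}, \ldots, \alpha^{p^{n-1}}$, with the $n$ exponents $0, 1, \ldots, n-1$ giving $n$ distinct roots. The primitivity hypothesis means, by definition, that $\alpha$ is a generator of the cyclic group $\mathbb{F}_{p^n}^{\times}$, so the order of $\alpha$ is exactly $p^n - 1$.

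Next I would translate $k$-regularity into an arithmetic statement. A choice of $k$ distinct roots amounts to a choice of $k$ distinct exponents $j_1, \ldots, j_k \in \{0, 1, \ldots, n-1\}$, and their product is
\[
\alpha^{p^{j_1}} \cdots \alpha^{p^{j_k}} = \alpha^{N}, \qquad N = p^{j_1} + \cdots + p^{j_k}.
\]
Thus $f$ fails to be $k$-regular precisely when there exist distinct $j_1, \ldots, j_k$ for which $N \equiv 0 \pmod{p^n - 1}$.

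The heart of the argument is then the elementary bound on $N$. Since the $j_i$ are distinct, $N$ is a positive integer whose base-$p$ expansion has exactly $k$ digits equal to $1$ and the rest equal to $0$; hence
\[
1 \le N \le p^{n-1} + p^{n-2} + \cdots + p^{n-k} = p^{n-k}\cdot\frac{p^k-1}{p-1} \le p^n - p^{n-k}.
\]
Because $k \le [n/2] < n$, one has $p^{n-k} \ge p \ge 2$, so $N < p^n - 1$. Combined with $N \ge 1$ this forces $N \not\equiv 0 \pmod{p^n - 1}$, whence $\alpha^N \ne 1$. This proves the required $k$-regularity for every $k$ with $1 \le k \le [n/2]$.

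The only step that requires any care is verifying the strict inequality $N < p^n - 1$; everything else is formal once the Frobenius description of the roots is in place. No serious obstacle is anticipated.
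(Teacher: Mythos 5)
Your proof is correct and follows essentially the same route as the paper's: describe the roots as the Frobenius orbit $\alpha,\alpha^p,\ldots,\alpha^{p^{n-1}}$, use primitivity to see that $\alpha$ has order $p^n-1$, and bound the exponent sum $N=p^{j_1}+\cdots+p^{j_k}$ strictly between $0$ and $p^n-1$ so that $\alpha^N\ne 1$. The only cosmetic difference is the form of the bound (you use $N\le p^n-p^{n-k}$, the paper compares the sum with $1+p+\cdots+p^{n-1}=(p^n-1)/(p-1)$), but the argument is the same.
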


\begin{proof}
Let $\overline{\mathbb{F}}_p$ be an algebraic closure of $\mathbb{F}_p$ and 
$\alpha$ a root of $f(x)$ in $\overline{\mathbb{F}}_p$. 
The field $\mathbb{F}_p(\alpha)$ generated by $\alpha$ over $\mathbb{F}_p$ is 
an extension field of $\mathbb{F}_p$ of degree $n$ because $f(x)$ is irreducible over $\mathbb{F}_p$. 
Hence $\mathbb{F}_p(\alpha)$ is equal to 
$\mathbb{F}_q=\{t\in\overline{\mathbb{F}}_p\, |\, t^q=t\}\; (q=p^n)$. 

By a property of the Frobenius endomorphism, we have $f(\alpha^p)=0$. 
Similarly, if we assume that $f(\alpha^{p^{i-1}})=0$, then we have $f(\alpha^{p^{i}})=0$ 
for every integer $i$ in $\{1,\ldots ,n-1\}$. Since the multiplicative group of $\mathbb{F}_q$ is 
a cyclic group of order $q-1$, we conclude that the set of roots of $f(x)$ is equal to 
$R=\{\alpha^{p^i}\, |\, i=0,\ldots ,n-1\}$. 

For an integer $k$ in $\{1,\ldots ,[n/2]\}$ and integers 
$i_1,\ldots ,i_k$ with $0\leq i_1<\cdots <i_k\leq n-1$, 
we consider the sum $s=s(i_1,\ldots ,i_k)=p^{i_1}+\cdots +p^{i_k}$. Then we obtain 
\[
s\leq p^{n-[n/2]}+\cdots +p^{n-1}<1+p+p^2+\cdots +p^{n-1}=\frac{p^n-1}{p-1}\leq q-1.
\]
Every product of $k$ elements of $R$ is expressed as $\alpha^s$ for some $k$-tuple 
$(i_1,\ldots ,i_k)$ of integers with $0\leq i_1<\cdots <i_k\leq n-1$. 
Since the multiplicative group of $\mathbb{F}_q$ is a cyclic group of order $q-1$, 
$\alpha^s$ is not equal to $1$
{\cb since it is a generator of the multiplicative group  $(\mathbb{F}_p(\alpha))^*$}.
Therefore $f(x)$ is $k$-regular over $\mathbb{F}_p$. 
\end{proof}

\begin{rem}
The irreducibility over $\mathbb{F}_p$ assumed in Proposition \ref{reg_p} is not a necessary 
condition for a polynomial to be regular if $p\geq 3$. For example, the polynomial 
\[
f(x)=x^8+x^7-x^6+x^5+x+1=(x^4-x^3-x^2+x-1)^2
\]
with coefficients in $\mathbb{F}_3$ is reducible, while it is regular over $\mathbb{F}_3$. 
\end{rem}

On the other hand, 
the irreducibility over $\mathbb{F}_2$ is a necessary condition for a polynomial to be regular. 

\begin{prop}\label{irred}
Let $f(x)$ be a polynomial of degree $n$ with coefficients in $\mathbb{F}_2$. 
If $f(x)$ is regular over $\mathbb{F}_2$ and its constant term is not equal to zero, 
then $f(x)$ is irreducible over $\mathbb{F}_2$. 
\end{prop}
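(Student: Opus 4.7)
The plan is to prove the contrapositive: assuming $f(x)$ is reducible with $f(0)\neq 0$, I will produce an integer $k$ with $1\leq k\leq [n/2]$ witnessing the failure of $k$-regularity.

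Write $f(x)=g(x)h(x)$ with $1\leq \deg g\leq \deg h$, and set $m=\deg g$, so $2m\leq n$ and hence $m\leq [n/2]$. Because $f(0)\neq 0$, both $g$ and $h$ have nonzero constant term; since in $\mathbb{F}_2$ the only nonzero constant is $1$, we have $g(0)=1$. Let $\beta_1,\ldots ,\beta_m\in\overline{\mathbb{F}}_2$ be the roots of $g$, listed with multiplicity. Vieta's formulas give $\beta_1\cdots \beta_m=(-1)^m g(0)=1$ in $\mathbb{F}_2$.

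These $\beta_j$ form a sub-multiset of the root list $\alpha_1,\ldots ,\alpha_n$ of $f$, so there is a strictly increasing tuple $1\leq i_1<\cdots <i_m\leq n$ with $\alpha_{i_1}\cdots \alpha_{i_m}=\beta_1\cdots \beta_m=1$. Combined with $1\leq m\leq [n/2]$, this contradicts the $m$-regularity of $f(x)$, and hence its regularity.

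The argument is elementary bookkeeping rather than a substantive obstacle; the only point deserving care is choosing the factor of smaller degree in order to guarantee $m\leq [n/2]$ in all cases, including when $n$ is even and $\deg g=\deg h=n/2$. The convention in the definition of regularity that roots are listed with their multiplicities is precisely what makes the passage from ``roots of $g$'' to ``a strictly increasing tuple of indices in $\{1,\ldots ,n\}$'' automatic, so no separate treatment of repeated roots is needed. It is also essential that we work over $\mathbb{F}_2$: over a larger field the constant term of a factor could be a nonzero element different from $1$, and the product of its roots would then only be forced to equal $\pm g(0)$, which would not in general give a violation of regularity.
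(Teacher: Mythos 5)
Your proof is correct and follows the same route as the paper: factor $f=gh$, take the factor of degree $m\leq [n/2]$, note its constant term must be $1$, and conclude that the product of its $m$ roots equals $1$, violating $m$-regularity. Your additional remarks on multiplicities, the sign in Vieta's formula, and the role of $\mathbb{F}_2$ are sound but do not change the argument.
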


\begin{proof}
Suppose that $f(x)$ is reducible over $\mathbb{F}_2$. 
There exist polynomials $g(x),\, h(x)$ of positive degrees with coefficients in $\mathbb{F}_2$ 
which satisfy $f(x)=g(x)h(x)$. We can assume without loss of generality 
that the degree $k$ of $g(x)$ is less than or equal to $[n/2]$. 
Since the constant term of $f(x)$ is equal to $1$, that of $g(x)$ must be equal to $1$. 
Hence the product of all $k$ roots of $g(x)$ is equal to $1$, 
which implies that $f(x)$ is not $k$-regular over $\mathbb{F}_2$. 
\end{proof}

\newcommand{\ff}{\mathbb F}
\newcommand{\nn}{\mathbb N}
\newcommand{\zz}{\mathbb Z}

{\cb
\begin{prop}\label{t:cs-primitive}
A polynomial $P\in\ff_2[x]$ with a
non-zero free term
is regular
f and only if
it is irreducible and primitive.
\end{prop}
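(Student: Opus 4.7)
The plan is to separate the biconditional into two implications and reduce each to material already established in the paper, with only one genuinely new argument required. The direction ``irreducible and primitive $\Rightarrow$ regular'' is exactly the $p=2$ case of Proposition \ref{reg_p}. For the converse, assume $P \in \mathbb{F}_2[x]$ of degree $n$ is regular and has nonzero constant term. Irreducibility of $P$ is immediate from Proposition \ref{irred}, so the only remaining task is to show that $P$ is primitive, that is, that a root $\alpha \in \overline{\mathbb{F}}_2$ of $P$ has multiplicative order $d = 2^n - 1$ in $\mathbb{F}_{2^n}^{*}$.

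Suppose for contradiction that $d$ is a proper divisor of $2^n - 1$. Because $P$ is irreducible of degree $n$ over $\mathbb{F}_2$, its roots are the Frobenius conjugates $\alpha, \alpha^{2}, \alpha^{2^2}, \ldots, \alpha^{2^{n-1}}$, so for any subset $S \subseteq \{0, 1, \ldots, n-1\}$ the product of the corresponding roots equals $\alpha^{\sum_{i \in S} 2^i}$, and this equals $1$ precisely when $d$ divides $\sum_{i \in S} 2^i$. The key step is a binary-complementation trick: let $S_1 \subseteq \{0, \ldots, n-1\}$ be the support of the binary expansion of $d$, so that $\sum_{i \in S_1} 2^i = d$, and let $S_2 = \{0,\ldots,n-1\} \setminus S_1$, so that $\sum_{i \in S_2} 2^i = 2^n - 1 - d$. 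Both of these sums are positive multiples of $d$, since $1 \leq d \leq 2^n - 2$, and $|S_1| + |S_2| = n$. Hence one of the two subsets has some cardinality $k$ with $1 \leq k \leq \lfloor n/2 \rfloor$, and the corresponding product of $k$ distinct roots of $P$ equals $1$, contradicting $k$-regularity.

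The only real obstacle is noticing the complementation step. A priori there is no reason to expect a proper divisor of $2^n - 1$ to have small Hamming weight in base two, and it often will not; but one does not need this, since passing from $d$ to $2^n - 1 - d$ swaps weight $k$ for weight $n - k$, forcing one of them to have weight at most $\lfloor n/2 \rfloor$. Once this observation is in hand, the Frobenius-orbit description of the roots together with Propositions \ref{reg_p} and \ref{irred} assemble the proof with no further difficulty.
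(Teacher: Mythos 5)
Your proof is correct and follows essentially the same route as the paper's: the forward implication is Proposition \ref{reg_p} with $p=2$, irreducibility of a regular polynomial comes from Proposition \ref{irred}, and primitivity is extracted from the Frobenius-orbit description of the roots together with the binary expansion of an exponent. The one place where you go beyond the paper's own argument is the complementation step, and it is genuinely needed. The paper's proof takes an arbitrary $r<2^n-1$, writes $\lambda^r$ as a product of at most $n-1$ pairwise distinct roots, and asserts that regularity forbids this product from equalling $1$; but regularity as defined only controls products of at most $[n/2]$ roots, so that assertion requires justification when the binary weight of $r$ lies strictly between $[n/2]$ and $n$. Your observation that one may pass from the binary support of the order $d$ to its complement in $\{0,\ldots,n-1\}$ (using $d\mid 2^n-1$, hence $d\mid 2^n-1-d$), thereby trading a weight-$k$ exponent for one of weight $n-k$, supplies exactly the missing reduction; equivalently, since the product of all $n$ roots equals the constant term $1$, $k$-regularity and $(n-k)$-regularity coincide for such polynomials. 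So your write-up is not merely correct but repairs a small gap in the paper's version of the argument.
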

\begin{proof}
Let $P$ be a regular polynomial.
Let
$\l\in\ff_{2^n}$
be any root of $P$, then the sequence
of all roots of $P$ is of the form
$$\l_0=\l, \ \ \l_1=\l^{2},\ \  \cdots,
\l_{n-1}=\l^{2^{n-1}}.$$
Let $r\in\nn, r< 2^{n}-1$.
There is a unique sequence
$a_0,\ldots,a_{n-1}$ with $a_i\in\{0,1\}$
such that
$r=\sum_ia_i\cdot 2^i$
(the dyadic expansion of $r$).
At least one of coefficients $a_i$
equals zero, since $r< 2^n-1$.
Observe that
$$
\l^r=\prod_{a_i\not=0} \l_i=
\prod_{a_i\not=0} \l^{2^i}.
$$
This is a product of pairwise different roots of $P$,
containing at most
$n-1$ roots. The condition
$CS$ implies that this product is not equal to $1$,
therefore $\l^r\not=1$, and the order of $\l$ in the
group
$\ff_{2^n}^*$
equals indeed
$2^n -1$.
\end{proof}
}

\subsection{Reduction modulo primes}

We next consider reductions of integer polynomials modulo prime numbers. 
{\cb
\begin{defn}
For  a polynomial $f(x)\in \zz[x]$ we denote by
$f_p(x)\in\ff_p[x]$
its reduction $mod\ p$.
Similarly for a matrix $A$ with integer coefficients
we denote by  $A_p$ its reduction $mod\ p$.
\end{defn}
}


\begin{prop}\label{CS-regular_p}
Let $k$ be an integer which satisfies $1\leq k\leq [n/2]$. 
Let $A$ be a square matrix of order $n$ with integer entries and $f(x)$ its characteristic polynomial. 
Then $A$ satisfies the condition $\mathrm{CS}_k$ 
if and only if $f_p(x)$ is $k$-regular over $\mathbb{F}_p$ for every prime number $p$. 
\end{prop}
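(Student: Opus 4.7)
The plan is to reformulate both sides of the biconditional as statements about the integer $N_k := f^{\wedge k}(1)$ and then invoke an elementary number-theoretic fact.

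First I would observe that $\det(I-\bigwedge^k A) = f^{\wedge k}(1)$, since $f^{\wedge k}(x) = \det(xI - \bigwedge^k A)$. Thus condition $\mathrm{CS}_k$ is exactly the assertion $N_k = \pm 1$. On the other side, $k$-regularity of $f_p(x)$ over $\mathbb{F}_p$ is equivalent, via the same identification, to $(f_p)^{\wedge k}(1) \neq 0$ in $\mathbb{F}_p$: indeed, letting $\beta_1,\ldots,\beta_{\binom{n}{k}}$ denote the products $\alpha_{i_1}\cdots\alpha_{i_k}$ of roots of $f_p$ in $\overline{\mathbb{F}}_p$, we have $(f_p)^{\wedge k}(1) = \prod_j(1-\beta_j)$, which is nonzero iff no $\beta_j$ equals $1$.

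The crucial compatibility step is that reduction modulo $p$ commutes with forming the $k$-th exterior power of a polynomial, i.e.\ $(f^{\wedge k})_p(x) = (f_p)^{\wedge k}(x)$ in $\mathbb{F}_p[x]$. This follows directly from the proof of Lemma \ref{ex}: each coefficient of $f^{\wedge k}(x)$ is a universal polynomial with integer coefficients in the coefficients $c_{n-1},\ldots,c_0$ of $f(x)$, so applying reduction mod $p$ before or after this universal polynomial yields the same result. Consequently, $(f_p)^{\wedge k}(1) = N_k \bmod p$, and $k$-regularity of $f_p$ over $\mathbb{F}_p$ is equivalent to $N_k \not\equiv 0 \pmod{p}$.

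Finally I would close the argument with the standard number-theoretic observation: for an integer $N$, the condition $N \not\equiv 0 \pmod{p}$ for every prime $p$ is equivalent to $N = \pm 1$. Indeed, $N=0$ is divisible by every prime, while any $N$ with $|N|\geq 2$ has at least one prime divisor. Combining this with the reformulations above yields $\mathrm{CS}_k \iff N_k = \pm 1 \iff$ ($f_p$ is $k$-regular over $\mathbb{F}_p$ for every prime $p$), which is the statement. There is no real obstacle here; the only step requiring a brief justification is the compatibility of exterior powers with reduction modulo $p$, which I would explicitly reference back to Lemma \ref{ex}.
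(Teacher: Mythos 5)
Your proof is correct and follows essentially the same route as the paper: both reduce $\mathrm{CS}_k$ to the statement that the integer $\det(I-\bigwedge^k A)=f^{\wedge k}(1)$ is not divisible by any prime, and identify its reduction mod $p$ with the corresponding quantity over $\mathbb{F}_p$, whose nonvanishing is exactly $k$-regularity of $f_p$. The only (cosmetic) difference is that you justify the compatibility with reduction mod $p$ via the universal integer polynomials for the coefficients of $f^{\wedge k}$ from Lemma \ref{ex}, whereas the paper works directly with the reduced matrix $A_p$ and the eigenvalues of $\bigwedge^k A_p$.
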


\begin{proof}
It is not difficult to see that $A$ satisfies the condition $\mathrm{CS}_k$ if and only if 
the integer $\det (I-\bigwedge^kA)$ is not divisible by any prime number $p$. 
Further, the latter is equivalent to the condition that $\det(I-\bigwedge^kA_p)\ne 0$ holds in $\mathbb{F}_p$ 
for every prime number $p$. Since $f_p(x)$ is the characteristic polynomial of $A_p$, 
this is equivalent to the condition that $f_p(x)$ is $k$-regular over $\mathbb{F}_p$ for every prime number $p$ 
because the set of eigenvalues of $\bigwedge^kA_p$ is equal to 
the set of products $\alpha_{i_1}\cdots \alpha_{i_k}$ 
for all $k$-tuples $(i_1,\ldots ,i_k)$ of integers with $1\leq i_1<\cdots <i_k\leq n$, 
where $\alpha_1,\ldots ,\alpha_n$ is the roots of $f(x)$ in an algebraic closure $\overline{\mathbb{F}}_p$ 
of $\mathbb{F}_p$. 
\end{proof}



The next proposition was first proved by Gu and Jiang \cite[Theorem 3.2]{GJ1999};
{\cb
we suggest another proof.

\begin{prop}
Every Cappell-Shaneson polynomial is irreducible over $\zz$.
\end{prop}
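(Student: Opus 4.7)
The plan is to reduce the question to $\mathbb{F}_2$ and combine three of the results already established in Sections 2 and 3. A Cappell-Shaneson polynomial $f(x)$ of degree $n$ is, by definition, the characteristic polynomial of a matrix $A\in\mathrm{SL}(n,\zz)$ satisfying $\mathrm{CS}_k$ for every $k\in\{1,\ldots,[n/2]\}$. In particular, $f$ is monic and doubly monic, so its constant term is $\pm 1$ and hence the reduction $f_2(x)\in\ff_2[x]$ still has non-zero constant term and the same degree $n$ as $f$.

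The core of the argument is to show that $f_2(x)$ is irreducible over $\ff_2$. By Proposition \ref{CS-regular_p}, the condition $\mathrm{CS}_k$ on $A$ forces $f_p(x)$ to be $k$-regular over $\ff_p$ for \emph{every} prime $p$; specialising to $p=2$ and letting $k$ range over $\{1,\ldots,[n/2]\}$, we conclude that $f_2(x)$ is regular over $\ff_2$. Since $f_2$ also has non-zero constant term, Proposition \ref{irred} applies and yields that $f_2(x)$ is irreducible in $\ff_2[x]$.

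To finish, I would invoke Gauss's lemma. Suppose for contradiction that $f(x)$ factors non-trivially over $\zz$. Because $f$ is monic, Gauss's lemma lets us write $f(x)=g(x)h(x)$ with $g,h\in\zz[x]$ monic and $\deg g,\deg h\geq 1$. Reducing modulo $2$ gives $f_2(x)=g_2(x)h_2(x)$, a factorisation in $\ff_2[x]$ into two factors each of positive degree (because the reductions of monic polynomials remain monic of the same degree). This contradicts the irreducibility of $f_2(x)$ established in the previous paragraph, so $f(x)$ must be irreducible over $\zz$.

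No step here looks genuinely difficult: the content of the proof is really in Propositions \ref{CS-regular_p} and \ref{irred}, which have already been proved, and in the observation that the constant term $\pm 1$ of a doubly monic polynomial is preserved under reduction mod $2$. The only thing to be slightly careful about is the passage from factorisation over $\mathbb{Q}$ to one over $\zz$ with monic factors, which is the standard consequence of Gauss's lemma for monic integer polynomials.
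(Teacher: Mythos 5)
Your proposal is correct and follows essentially the same route as the paper: reduce mod $2$, use Proposition \ref{CS-regular_p} to get regularity of $f_2$ over $\mathbb{F}_2$, apply Proposition \ref{irred} to conclude $f_2$ is irreducible, and lift irreducibility back to $\mathbb{Z}$. The only difference is that you spell out the final lifting step via Gauss's lemma, which the paper leaves implicit.
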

}
\begin{proof}
Let $f(x)$ be a Cappell-Shaneson polynomial and $A$ the companion matrix of $f(x)$. 
Since $A$ is a Cappell-Shaneson matrix by Lemma \ref{CSCS}, 
$f_2(x)$ is regular over $\mathbb{F}_2$ by Proposition \ref{CS-regular_p}. 
Since $f(x)$ is doubly monic, the constant term of $f_2(x)$ is equal to $1$. 
Hence $f_2(x)$ is irreducible over $\mathbb{F}_2$ by Proposition \ref{irred}. 
Therefore $f(x)$ is irreducible over
{\cb
$\mathbb{Z}$.}
\end{proof}

\begin{rem}
There exists a doubly monic polynomial of degree $n$ with integer coefficients whose Galois group 
is isomorphic to $S_n$ and whose reduction modulo $2$ is not regular over $\mathbb{F}_2$. 
For example, the Galois group of $f(x)=x^5-x-1$ is isomorphic to $S_5$, while 
$f_2(x)=(x^2+x+1)(x^3+x^2+1)$ is reducible over $\mathbb{F}_2$, and hence $f_2(x)$ is not regular 
over $\mathbb{F}_2$ by Proposition \ref{irred}. 
\end{rem}

\begin{prop}\label{CSdual}
A doubly monic polynomial $f(x)$ of degree $n$ with integer coefficients is 
a Cappell-Shaneson polynomial if and only if $f^*(x)$ is a Cappell-Shaneson polynomial. 
\end{prop}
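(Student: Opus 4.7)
The plan is to leverage the characterization of Cappell-Shaneson polynomials via reductions modulo primes (Proposition \ref{CS-regular_p}) together with the invariance of $k$-regularity under the signed reciprocal operation (Proposition \ref{reg*}). The whole argument is essentially formal once the right compatibilities are set up.

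First I would dispatch the routine compatibilities. Writing $f(x)=\sum_{i=0}^{n} c_i x^i$ with $c_n=1$ and $c_0=(-1)^n$, a direct computation from $f^*(x)=(-1)^n x^n f(x^{-1})$ shows that $f^*(x)\in\mathbb{Z}[x]$ is again doubly monic of degree $n$, and that the $*$ operation commutes with reduction modulo any prime, i.e.\ $(f^*)_p(x)=(f_p)^*(x)$ in $\mathbb{F}_p[x]$. Note also that $f_p(x)$ remains doubly monic, so Proposition \ref{reg*} applies to it verbatim.

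Next, take $A$ to be the companion matrix of $f(x)$. Because $f$ is doubly monic, $A\in\mathrm{SL}(n,\mathbb{Z})$, and $A^{-1}$ is an integer matrix whose characteristic polynomial is exactly $f^*(x)$. By Corollary \ref{CSCS} the property of being a Cappell-Shaneson matrix depends only on the characteristic polynomial, so $f$ is a Cappell-Shaneson polynomial iff $A$ is a Cappell-Shaneson matrix, and $f^*$ is a Cappell-Shaneson polynomial iff $A^{-1}$ is a Cappell-Shaneson matrix. Applying Proposition \ref{CS-regular_p} to each of these matrices translates the statement into the purely polynomial equivalence
\[
\text{$f_p(x)$ is $k$-regular over $\mathbb{F}_p$}\ \Longleftrightarrow\ \text{$(f^*)_p(x)$ is $k$-regular over $\mathbb{F}_p$}
\]
for every prime $p$ and every $k\in\{1,\ldots,[n/2]\}$.

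The final step is now immediate: by the compatibility $(f^*)_p=(f_p)^*$ and Proposition \ref{reg*} applied over the field $K=\mathbb{F}_p$ to the doubly monic polynomial $f_p$, the $k$-regularity of $f_p(x)$ and of $(f_p)^*(x)$ over $\mathbb{F}_p$ are equivalent. Chaining this equivalence across all primes $p$ and all $k\in\{1,\ldots,[n/2]\}$ yields the statement. There is no real obstacle here; the only subtlety to watch is that the $*$ operation genuinely commutes with mod-$p$ reduction, which is why the doubly monic hypothesis (ensuring $f^*$ has the same degree as $f$ and that the reductions behave well) is essential.
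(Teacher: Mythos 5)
Your proof is correct and follows exactly the route the paper intends: its own proof is the one-line "Straightforward from Propositions \ref{reg*} and \ref{CS-regular_p}," and your argument is precisely the fleshed-out version of that, via the companion matrix, the compatibility $(f^*)_p=(f_p)^*$, and Corollary \ref{CSCS}.
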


\begin{proof}
Straightforward from Propositions \ref{reg*} and \ref{CS-regular_p}. 
\end{proof}

\begin{exmp}[Doubly monic $1$- and $2$-regular polynomials] 
For every even integer $n$ greater than one, we show that the doubly monic polynomial 
$f(x)=x^{2n+2}-x^n+1$ is $1$-regular and $2$-regular over $\mathbb{Q}$. 
Since $f(1)=1\ne 0$, every root of $f(x)$ is not equal to $1$. Hence $f(x)$ is $1$-regular over $\mathbb{Q}$. 
Assume that $f(x)$ is not $2$-regular over $\mathbb{Q}$. 
By Lemma \ref{CS-regular}, the companion matrix $A$ of $f(x)$ does not satisfy the condition $\mathrm{CS}_2$. 
Hence $f_p(X)$ is not $2$-regular over $\mathbb{F}_p$ for some prime number $p$ 
by Proposition \ref{CS-regular_p}. 
It follows from Theorem \ref{2-regular} that there exists a polynomial $g(x)$ of 
degree $2$ with coefficients in $\overline{\mathbb{F}}_p$ which divides 
both of $f_p(x)$ and $f_p^*(x)$. Since we have
\[
f_p(x)-f_p^*(x)=x^n(x-1)(x+1), 
\]
$g(x)$ must be $x^2$, $x(x-1)$, $x(x+1)$, or $(x-1)(x+1)$. 
Then at least one of $f_p(0)$, $f_p(1)$, $f_p(-1)$ is equal to $0$ because $f_p(x)$ is divisible by 
$g(x)$. On the other hand, we have $f_p(0)=f_p(1)=f_p(-1)=1$ because $n$ is even. 
This is a contradiction. Hence $f(x)$ is $2$-regular over $\mathbb{Q}$. 
\end{exmp}

\begin{exmp}[A doubly monic $3$-regular polynomial of degree $6$] 
We show that the doubly monic polynomial 
$f(x)=x^6+x^5-x^4-2x^3+x+1$ is $3$-regular over $\mathbb{Q}$. 
The formal derivative $f'(x)$ of $f(x)$ is $6x^5+5x^4-4x^3-6x^2+1$. 
It is not difficult to see that $f_p(x)$ and $f'_p(x)$ are coprime and hence $f_p(x)$ is separable 
for every prime number $p$. 
By direct computation, we obtain 
{\allowdisplaybreaks %
\begin{align*}
f^{\wedge 2}(x) & =x^{15}+x^{14}-2x^{13}-4x^{12}-x^{11}+3x^{10}+3x^9+2x^8 \\
& -x^7-4x^6-x^5+x^4+3x^3+x^2-1 
\end{align*}}
and $f^*(x)=x^6+x^5-2x^3-x^2+x+1$. 
The remainder in the division of $f^{\wedge 2}(x)$ by $f^*(x)$ is equal to 
$r(x)=4x^5-7x^4+7x^2+2x-4$. 
Since the greatest common divisor of the coefficients of $r(x)$ is equal to $1$, 
$f_p^{\wedge 2}(x)$ is not divided by $f_p^*(x)$ for every prime number $p$. 
By Proposition \ref{3-regular}, $f_p(x)$ is $3$-regular over $\mathbb{F}_p$ for every prime number $p$, 
which implies that the companion matrix $A$ of $f(x)$ satisfies the condition $\mathrm{CS}_3$. 
Therefore $f(x)$ is $3$-regular over $\mathbb{Q}$ by Lemma \ref{CS-regular}. 
\end{exmp}


\section{Degree 4: three approaches}


In this section we give a complete list of Cappell-Shaneson polynomials of degree $4$. 
We first state the main theorem of this section. 

\begin{thm}\label{CS4}
Let $f(x)=x^4+c_3x^3+c_2x^2+c_1x+c_0$ be a monic polynomial of degree $4$ with integer coefficients. 
Then $f(x)$ is a Cappell-Shaneson polynomial if and only if the $4$-tuple $(c_0,c_1,c_2,c_3)$ of 
its coefficients is equal to one of those exhibited below, where $a$ is an integer. 
Further, $f(x)$ is positive if and only if its coefficients satisfy 
the positivity condition indicated in each row in the table below. 

\medskip

\begin{center}
\begin{tabular}{|c|c|c|c|c|} \hline 
$c_0$ & $c_1$ & $c_2$ & $c_3$ & {\rm positivity} \\ \hline 
$1$ & $a-1$ & $-2a$ & $a$ & $a\leq 0$ \\ \hline
$1$ & $a-1$ & $-2a-2$ & $a$ & $a\leq 0$ \\ \hline
\end{tabular} 
\begin{tabular}{|c|c|c|c|c|} \hline 
$c_0$ & $c_1$ & $c_2$ & $c_3$ & {\rm positivity} \\ \hline 
$1$ & $a+1$ & $-2a-2$ & $a$ & $a\leq -1$ \\ \hline
$1$ & $a+1$ & $-2a-4$ & $a$ & $a\leq -1$ \\ \hline
\end{tabular} 
\end{center}

\end{thm}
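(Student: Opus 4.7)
The plan is to translate the defining conditions into explicit relations among the coefficients $c_0, c_1, c_2, c_3$, then parametrize and verify positivity case-by-case. First, $A\in\mathrm{SL}(4,\mathbb{Z})$ forces $f$ to be doubly monic, so $c_0 = 1$. The condition $\mathrm{CS}_1$ reads $f(1)=\det(I-A)=\pm 1$, which becomes $c_1+c_2+c_3\in\{-1,-3\}$. The only remaining nontrivial condition is $\mathrm{CS}_2$, and the task reduces to characterizing this in terms of $c_1,c_2,c_3$.

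The heart of the proof is to establish that $\mathrm{CS}_2$ is equivalent to $(c_1-c_3)^2 = 1$. The title of the section suggests three natural routes, all reaching the same conclusion. \emph{Approach 1:} evaluate $f^{\wedge 2}(1)$ directly. Since $\alpha_1\alpha_2\alpha_3\alpha_4 = c_0 = 1$, the six eigenvalues $\alpha_i\alpha_j$ of $\bigwedge^{2}A$ group into three complementary pairs with product $1$, and each pair contributes $(1-\alpha_i\alpha_j)(1-\alpha_k\alpha_l) = 2-(\alpha_i\alpha_j+\alpha_k\alpha_l)$. Hence $f^{\wedge 2}(1) = g(2)$, where $g(y)$ is the classical cubic resolvent of $f$; substitution (with $c_0=1$) yields $g(2) = -(c_1-c_3)^2$, so $\mathrm{CS}_2$ forces $(c_1-c_3)^2=1$. \emph{Approach 2:} combine Theorem \ref{2-regular} and Proposition \ref{CS-regular_p}. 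Here $f(x)-f^*(x) = (c_3-c_1)\,x(x-1)(x+1)$, while $f(0)=1$ and $f(\pm 1)\in\{\pm 1\}$, so $f_p$ and $f_p^*$ share no quadratic divisor over $\overline{\mathbb{F}_p}$ for any prime $p\nmid(c_3-c_1)$; $\mathrm{CS}_2$ thus holds exactly when $|c_3-c_1|=1$. \emph{Approach 3:} expand $\det(I-\bigwedge^{2}A)$ directly for the companion matrix of $f$ as a $6\times 6$ determinant and simplify. All three approaches yield $c_1-c_3=\pm 1$.

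Setting $c_3=a$ and combining $c_1-c_3\in\{-1,+1\}$ with $c_1+c_2+c_3\in\{-1,-3\}$ produces exactly the four families listed in the table. For positivity, substitute $t=-s$ with $s>0$ and analyze $f(-s)$; in each family one rewrites $f(-s)$ as $P(s) - a\cdot s(s+1)^2$ for an explicit $a$-independent $P(s)$, and verifies that the asserted bound on $a$ makes every term non-negative with at least one strictly positive, while for $a$ just outside the bound the value $f(-1)$ (or a nearby integer argument) already becomes non-positive. The main obstacles are the algebraic simplification yielding $f^{\wedge 2}(1)=-(c_1-c_3)^2$ and the careful boundary analysis of positivity at the borderline integer value of $a$ in each family; both are routine but require attention to detail.
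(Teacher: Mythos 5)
Your proposal is correct and follows essentially the same route as the paper, which in fact presents all three of your approaches (companion-matrix computation, symmetric functions/exterior power, and the signed-reciprocal argument via Theorem \ref{2-regular} and Proposition \ref{CS-regular_p}), each yielding $\det(I-\bigwedge^2A)=-(c_3-c_1)^2$ and hence the four families. Your one genuinely new touch is evaluating $f^{\wedge 2}(1)$ as the resolvent cubic at $2$ via the pairing $\alpha_i\alpha_j\cdot\alpha_k\alpha_l=c_0=1$, a clean shortcut past the full sextic $f^{\wedge 2}(x)$; on positivity (which the paper leaves as an easy check), note that in the families with $c_1=a+1$ the polynomial $P(s)$ in your decomposition $f(-s)=P(s)-a\,s(s+1)^2$ is not itself nonnegative, so one must first absorb one copy of $s(s+1)^2$ using $-a\ge 1$ before concluding.
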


\begin{rem}
Two families of polynomials in the first row in the table above are `reciprocal': 
one family consists of the (signed) reciprocal polynomials of all polynomials in the other family. 
Two families of polynomials in the second row in the table above are also `reciprocal' 
(see Proposition \ref{CSdual}). 
\end{rem}

\begin{rem}
The polynomials $f(x)=x^4+ax^3-2(a+1)x^2+(a+1)x+1\, (a\leq -1)$ were found by Cappell and Shaneson 
\cite{CS1976}. The polynomials $f(x)=x^4+ax^3-2ax^2+(a-1)x+1\, (a\leq 0)$ were found by 
Gu and Jiang \cite{GJ1999}. 
\end{rem}

We will describe three different proofs of the first part of Theorem \ref{CS4} in the following subsections. 
It is not difficult to check the positivity conditions. 

\subsection{The first proof: companion matrix}

Let $A$ be the companion matrix of $f(x)$. 
Since $f(x)$ is the characteristic polynomial of $A$, $f(x)$ is a Cappell-Shaneson polynomial 
if and only if $A$ is a Cappell-Shaneson matrix by Corollary \ref{CSCS}. 
The condition $\det A=1$ is equivalent to the condition $c_0=1$. 
Since we have $\det(I-A)=f(1)=c_1+c_2+c_3+2$, 
$A$ satisfies the condition $\mathrm{CS}_1$ if and only if $c_1+c_2+c_3=-1,-3$. 
By direct computation of $\bigwedge^2A$, we obtain the equality $\det(I-\bigwedge^2A)=-(c_3-c_1)^2$. 
Hence $A$ satisfies the condition $\mathrm{CS}_2$ if and only if the equality $c_3-c_1=\pm 1$ holds. 
Therefore we obtain four families of Cappell-Shaneson polynomials shown above. 

\subsection{The second proof: symmetric polynomials}

Let $A$ be a Cappell-Shaneson matrix of order $4$ with characteristic polynomial $f(x)$. 
The condition $\det A=1$ is equivalent to the condition $c_0=1$. 
Since we have $\det(I-A)=f(1)=c_1+c_2+c_3+2$, 
$A$ satisfies the condition $\mathrm{CS}_1$ if and only if $c_1+c_2+c_3=-1,-3$. 

Let $\alpha_1,\alpha_2,\alpha_3,\alpha_4$ be the roots of $f(x)$ in $\overline{\mathbb{Q}}$. 
The coefficient $s_k$ of the degree $k$ term of the characteristic polynomial 
$f^{\wedge 2}(x)$ of $\bigwedge^2A$ 
is equal to the elementary symmetric polynomial of degree $6-k$ in the valuables 
$S=\{\alpha_i\alpha_j\, |\, 1\leq i<j\leq 4\}$. Since the set $S$ is invariant under the permutations of 
$\alpha_1,\alpha_2,\alpha_3,\alpha_4$, the coefficient $s_k$ is a symmetric polynomial 
in the valuables $\alpha_1,\alpha_2,\alpha_3,\alpha_4$. 
Hence $s_k$ can be expressed as a polynomial of the elementary symmetric polynomials 
in the valuables $\alpha_1,\alpha_2$, $\alpha_3,\alpha_4$. 
As a consequence, $s_k$ can be expressed as a polynomial of $c_1,c_2,c_3$ 
because $c_i$ is equal to the elementary symmetric polynomial of degree $4-i$ in the valuables 
$\alpha_1,\alpha_2,\alpha_3,\alpha_4$ (see also Lemma \ref{ex}). In fact, we have 
\[
s_1=s_5=-c_2,\quad s_2=s_4=c_1c_3-1,\quad s_3=2c_2-c_1^2-c_3^2
\]
and thus $f^{\wedge 2}(x)=x^6-c_2x^5+(c_1c_3-1)x^4+(2c_2-c_1^2-c_3^2)x^3+(c_1c_3-1)x^2-c_2x+1$. 
We obtain the equality $\det(I-\bigwedge^2A)=f^{\wedge 2}(1)=-(c_3-c_1)^2$. 
Hence $A$ satisfies the condition $\mathrm{CS}_2$ if and only if the equality $c_3-c_1=\pm 1$ holds. 

Therefore we obtain four families of Cappell-Shaneson polynomials shown above. 

\subsection{The third proof: signed reciprocal polynomial}

Let $A$ be a Cappell-Shaneson matrix of order $4$ with characteristic polynomial $f(x)$. 
The condition $\det A=1$ is equivalent to the condition $c_0=1$. 
Since we have $\det(I-A)=f(1)=c_1+c_2+c_3+2$, 
$A$ satisfies the condition $\mathrm{CS}_1$ if and only if $c_1+c_2+c_3=-1,-3$. 

Since the signed reciprocal polynomial $f^*(x)$ of $f(x)$ is equal to $x^4+c_1x^3+c_2x^2+c_3x+1$, 
we have 
\[
f(x)-f^*(x)=(c_3-c_1)x(x-1)(x+1). 
\]
If $c_3-c_1$ is not equal to $\pm 1$, it is divisible by some prime number $p$. 
From the equality above, $f_p(x)$ is equal to $f_p^*(x)$ as an element of $\overline{\mathbb{F}}_p[x]$. 
There exists a polynomial of degree $2$ in $\overline{\mathbb{F}}_p[x]$ such that it divides both of $f_p(x)$ and 
$f_p^*(x)$, which implies that $f_p(x)$ is not $2$-regular over $\mathbb{F}_p$ by 
Theorem \ref{2-regular}. Hence $A$ does not satisfy the condition $\mathrm{CS}_2$ 
by Proposition \ref{CS-regular_p}. 
If $c_3-c_1$ is equal to $\pm 1$, we have $f_p(x)-f_p^*(x)=\pm x(x-1)(x+1)$ 
for every prime number $p$. 
Both of $f_p(x)$ and $f_p^*(x)$ are not divisible by $x$ because of $f_p(0)=f_p^*(0)=1$. 
Both of $f_p(x)$ and $f_p^*(x)$ are not divisible by $x-1$ because of $f_p(1)=f_p^*(1)=\pm 1$. 
Therefore the degree of a common divisor of $f_p(x)$ and $f_p^*(x)$ is less than $2$, 
which means that $f_p(x)$ is $2$-regular over $\mathbb{F}_p$ for every prime number $p$ 
by Theorem \ref{2-regular}. From Proposition \ref{CS-regular_p}, 
$A$ satisfies the condition $\mathrm{CS}_2$ if and only if the equality $c_3-c_1=\pm 1$ holds. 

Therefore we obtain four families of Cappell-Shaneson polynomials shown above.


\section{Degree 5}


In this section we give a complete list of Cappell-Shaneson polynomials of degree $5$. 
We state the main theorem of this section. 

\begin{thm}\label{CS5}
Let $f(x)=x^5+c_4x^4+c_3x^3+c_2x^2+c_1x+c_0$ be 
a monic polynomial of degree $5$ with integer coefficients. 
Then $f(x)$ is a Cappell-Shaneson polynomial if and only if the $5$-tuple $(c_0,c_1,c_2,c_3,c_4)$ of 
its coefficients is equal to one of those exhibited below, where $a$ and $b$ are integers. 


\begin{longtable}{|c|c|c|c|c|c|} \hline 
{\rm Case} & $c_0$ & $c_1$ & $c_2$ & $c_3$ & $c_4$ \\ \hline 
{\rm I-i-1} & $-1$ & $-a+1$ & $2a+1$ & $-2a-1$ & $a$ \\ \hline
{\rm I-i-2} & $-1$ & $-2a+2$ & $5a$ & $-5a-1$ & $2a$ \\ \hline
{\rm I-i-3} & $-1$ & $-a-1$ & $4a+15$ & $-4a-13$ & $a$ \\ \hline
{\rm I-i-4} & $-1$ & $-2a-2$ & $7a+13$ & $-7a-10$ & $2a$ \\ \hline
{\rm I-ii-1} & $-1$ & $-a$ & $-b+1$ & $b$ & $a$ \\ \hline
{\rm I-ii-2} & $-1$ & $(-b+1)a+5$ & $(3b-2)a-b-9$ & $(-3b+1)a+b+10$ & $ab-5$ \\ \hline
{\rm II-i-1} & $-1$ & $-a+1$ & $4a+9$ & $-4a-11$ & $a$ \\ \hline
{\rm II-i-2} & $-1$ & $-2a+2$ & $7a+3$ & $-7a-6$ & $2a$ \\ \hline
{\rm II-i-3} & $-1$ & $-a-1$ & $2a+3$ & $-2a-3$ & $a$ \\ \hline
{\rm II-i-4} & $-1$ & $-2a-2$ & $5a+6$ & $-5a-5$ & $2a$ \\ \hline
{\rm II-ii-1} & $-1$ & $-a$ & $-b-1$ & $b$ & $a$ \\ \hline
{\rm II-ii-2} & $-1$ & $(-b+1)a+5$ & $(3b-2)a+b-11$ & $(-3b+1)a-b+10$ & $ab-5$ \\ \hline
\end{longtable} 

\end{thm}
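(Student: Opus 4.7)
The plan is to follow the three-method strategy of Section 4, combining the companion-matrix, symmetric-polynomial, and signed-reciprocal-polynomial approaches. Since $[5/2]=2$, only the conditions $\mathrm{CS}_1$ and $\mathrm{CS}_2$ need be analyzed beyond the doubly monic condition. Since $n=5$ is odd and $\det A = +1$, one first gets $c_0 = -1$. The condition $\mathrm{CS}_1$ then rewrites as $c_1 + c_2 + c_3 + c_4 = \pm 1$, splitting the analysis into Case I ($+1$) and Case II ($-1$).

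For $\mathrm{CS}_2$, I would compute $f^{\wedge 2}(1)$ as an explicit polynomial in $c_1, \ldots, c_4$ via Lemma \ref{ex}: either by expanding the elementary symmetric polynomials in the ten products $\alpha_i \alpha_j$ back in terms of the $c_k$'s, or by direct determinant calculation on the companion matrix. The key structural observation, following the third proof of Theorem \ref{CS4}, is that
\[
f(x) - f^*(x) = x(x+1)\bigl[u x^2 + (v-u) x + u\bigr], \qquad u = c_1 + c_4,\ v = c_2 + c_3.
\]
Combined with Theorem \ref{2-regular} and Proposition \ref{CS-regular_p}, this isolates $u$ as the distinguished parameter controlling common factors of $f_p$ and $f_p^*$ over $\overline{\mathbb{F}_p}$ for each prime $p$, and gives a first necessary condition via analysis of $f_p(0)$ and $f_p(-1)$.

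After substituting $v = \pm 1 - u$, the Diophantine equation $f^{\wedge 2}(1) = \pm 1$ should factor according to the value of $u$. I expect four i-type subfamilies corresponding to the small values $u \in \{+1, +2, -1, -2\}$: at each such value the equation becomes linear in one remaining unknown and yields a 1-parameter family (I-i-1 through I-i-4 in Case I, and II-i-1 through II-i-4 in Case II, distinguished by the parity of $u$). The two ii-type subfamilies come from a complementary branch of the same factorization: $u = 0$ produces ii-1 as a 2-parameter family, while a separate factor in the Diophantine equation produces the more elaborate ii-2 family parameterized by both $a$ (playing the role of $u$) and $b$.

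The main obstacle is the Diophantine analysis itself: $f^{\wedge 2}(1) \mp 1$ is a polynomial of considerable complexity in $c_1, \ldots, c_4$, and uncovering its factorization structure in the new variables $(u, v, c_1 - c_4, c_2 - c_3)$ requires patient manipulation. The involution $f \leftrightarrow f^*$ from Proposition \ref{CSdual} should be exploited to cut the work roughly in half: it is expected to pair subfamilies (I-i-1 with I-i-3 and I-i-2 with I-i-4 as signed reciprocals), and the same applies within Case II. With these symmetries only a handful of subcases need detailed verification before the complete list can be assembled.
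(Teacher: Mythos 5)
Your overall route is the one the paper takes: pass to the companion matrix, obtain $c_0=-1$, the linear condition $c_1+c_2+c_3+c_4=\pm1$ from $\mathrm{CS}_1$, and the quartic Diophantine condition $f^{\wedge 2}(1)=\pm 1$ from $\mathrm{CS}_2$; then eliminate $c_2$ and factor the latter with $u=c_1+c_4$ as the pivotal quantity. Indeed, after the substitution the paper writes the left-hand side as $uv-1$ with $v=(c_1-c_3-2c_4-5)u+c_4+5$ in Case I (with $c_4+5$ replaced by $-(c_4+5)$ in Case II), so the i-subcases are exactly $uv=2$ with $u\in\{1,2,-1,-2\}$ and the ii-subcases are $uv=0$, split into $u=0$ and $v=0,\,u\neq 0$ --- precisely as you predict. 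Two caveats: the cofactor $v$ in this factorization is \emph{not} your $v=c_2+c_3$ (which enters only through $u+v=\pm1$ to eliminate one coefficient) but a quadratic expression in the coefficients, so the ``patient manipulation'' you defer is the actual content of the proof; and your identity $f(x)-f^*(x)=x(x+1)\bigl(ux^2+(v-u)x+u\bigr)$ is correct but plays no role in the paper's argument for degree $5$ (the paper uses that device only in a remark after the theorem, to re-verify family I-ii-1).

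The one concrete error is your symmetry claim. Since $n=5$ is odd, $f^*(1)=(-1)^5f(1)=-f(1)$, so the involution $f\mapsto f^*$ exchanges Case I ($f(1)=1$) with Case II ($f(1)=-1$); it cannot pair subfamilies within the same case. The correct pairs are I-i-1 $\leftrightarrow$ II-i-3, I-i-2 $\leftrightarrow$ II-i-4, I-i-3 $\leftrightarrow$ II-i-1, I-i-4 $\leftrightarrow$ II-i-2, I-ii-1 $\leftrightarrow$ II-ii-1 and I-ii-2 $\leftrightarrow$ II-ii-2: for instance, applying $(c_1,c_2,c_3,c_4)\mapsto(-c_4,-c_3,-c_2,-c_1)$ to I-i-1 yields II-i-3 after reindexing $a\mapsto a-1$. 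If you halve the casework as planned by deriving I-i-3 and I-i-4 from I-i-1 and I-i-2 \emph{inside} Case I, you will write down incorrect families. The legitimate shortcut afforded by Proposition \ref{CSdual} is to carry out all of Case I and then obtain all of Case II by applying $*$.
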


The method of the proof of Theorem \ref{CS5} is basically similar to that of 
the first proof of Theorem \ref{CS4}, while the proof itself is much more complicated. 

\begin{proof}
Let $A$ be the companion matrix of $f(x)$. 
Since $f(x)$ is the characteristic polynomial of $A$, $f(x)$ is a Cappell-Shaneson polynomial 
if and only if $A$ is a Cappell-Shaneson matrix by Corollary \ref{CSCS}. 
The condition $\det A=1$ is equivalent to the condition $c_0=-1$. 
Since we have $\det(I-A)=f(1)=c_1+c_2+c_3+c_4$, 
$A$ satisfies the condition $\mathrm{CS}_1$ if and only if the equality 
\begin{equation}
c_1+c_2+c_3+c_4=\pm 1 \tag{A}
\end{equation}
holds. 
By direct computation of $\det(I-\bigwedge^2A)$, 
we know that $A$ satisfies the condition $\mathrm{CS}_2$ if and only if the equality 
\begin{equation}
\begin{split}
& 3c_2c_4-c_2^2-c_1c_4^2+3c_1c_3+c_1c_2c_4-c_1^2-c_4^3+3c_3c_4
-c_1^2c_3+c_2c_4^2 \\
& -2c_2c_3-2c_1c_4+c_1^3-c_1c_3c_4+3c_1c_2
-c_4^2+c_1^2c_4-c_3^2=\pm 1 
\end{split}\tag{B}
\end{equation}
holds. 

Case I: 
Suppose that the right-hand side of (A) is equal to $1$, that is, $c_1+c_2+c_3+c_4=1$. 
Substituting $1-c_1-c_3-c_4$ for $c_2$, we can show that the left-hand side of (B) 
is equal to $uv-1$, 
where
\[
u=c_1+c_4,\quad 
v=(c_1-c_3-2c_4-5)u+c_4+5. 
\]

Case I-i: 
Suppose that the right-hand side of (B) is equal to $1$. 
Then the equation (B) is equivalent to $uv=2$. 
Since $u$ and $v$ are integers, the pair $(u,v)$ is equal to $(1,2), (2,1), (-1,-2)$, or $(-2,-1)$. 

If $(u,v)=(1,2)$, then we have $c_1=1-c_4$, $c_3=-2c_4-1$ and $c_2=2c_4+1$. 
Hence we obtain 
$(c_1,c_2,c_3,c_4)=(-a+1,2a+1,-2a-1,a)$
for some integer $a$. 

If $(u,v)=(2,1)$, then we have $c_1=2-c_4$, $c_3=(-5c_4-2)/2$ and $c_2=5c_4/2$. 
Hence we obtain 
$(c_1,c_2,c_3,c_4)=(-2a+2,5a,-5a-1,2a)$ 
for some integer $a$. 

If $(u,v)=(-1,-2)$, then we have $c_1=-1-c_4$, $c_3=-4c_4-13$ and $c_2=4c_4+15$. 
Hence we obtain 
$(c_1,c_2,c_3,c_4)=(-a-1,4a+15,-4a-13,a)$ 
for some integer $a$. 

If $(u,v)=(-2,-1)$, then we have $c_1=-2-c_4$, $c_3=(-7c_4-20)/2$ and $c_2=(7c_4+26)/2$. 
Hence we obtain 
$(c_1,c_2,c_3,c_4)=(-2a-2,7a+13,-7a-10,2a)$ 
for some integer $a$. 

Case I-ii: 
Suppose that the right-hand side of (B) is equal to $-1$. 
Then the equation (B) is equivalent to $uv=0$, which implies that $u=0$ or $v=0$. 

If $u=0$, then we have $c_1=-c_4$ and $c_2=1-c_3$. 
Hence we obtain 
$(c_1,c_2,c_3,c_4)=(-a,-b+1,b,a)$ 
for some integers $a$ and $b$. 

If $u\ne 0$ and $v=0$, then $c_4+5$ is divided by $u$ 
because we have $c_4+5=-(c_1-c_3-2c_4-5)u$. 
We rewrite $u$ and $c_4+5$ as $a$ and $ab$, respectively, 
where $a$ is a non-zero integer and $b$ is an integer. 
Then we obtain 
$(c_1,c_2,c_3,c_4)=((-b+1)a+5, (3b-2)a-b-9, (-3b+1)a+b+10, ab-5)$. 

Case II: 
Suppose that the right-hand side of (A) is equal to $-1$, that is $c_1+c_2+c_3+c_4=-1$. 
Substituting $-1-c_1-c_3-c_4$ for $c_2$, we can show that the left-hand side of (B) 
is equal to $uv-1$, 
where
\[
u=c_1+c_4,\quad 
v=(c_1-c_3-2c_4-5)u-c_4-5. 
\]

Case II-i: 
Suppose that the right-hand side of (B) is equal to $1$. 
Then the equation (B) is equivalent to $uv=2$. 
Since $u$ and $v$ are integers, the pair $(u,v)$ is equal to $(1,2), (2,1), (-1,-2)$, or $(-2,-1)$. 

If $(u,v)=(1,2)$, then we have $c_1=1-c_4$, $c_3=-4c_4-11$ and $c_2=4c_4+9$. 
Hence we obtain 
$(c_1,c_2,c_3,c_4)=(-a+1,4a+9,-4a-11,a)$ 
for some integer $a$. 

If $(u,v)=(2,1)$, then we have $c_1=2-c_4$, $c_3=(-7c_4-12)/2$ and $c_2=(7c_4+6)/2$. 
Hence we obtain 
$(c_1,c_2,c_3,c_4)=(-2a+2,7a+3,-7a-6,2a)$ 
for some integer $a$. 

If $(u,v)=(-1,-2)$, then we have $c_1=-1-c_4$, $c_3=-2c_4-3$ and $c_2=2c_4+3$. 
Hence we obtain 
$(c_1,c_2,c_3,c_4)=(-a-1,2a+3,-2a-3,a)$ 
for some integer $a$. 

If $(u,v)=(-2,-1)$, then we have $c_1=-2-c_4$, $c_3=(-5c_4-10)/2$ and $c_2=(5c_4+12)/2$. 
Hence we obtain 
$(c_1,c_2,c_3,c_4)=(-2a-2,5a+6,-5a-5,2a)$ 
for some integer $a$. 

Case II-ii: 
Suppose that the right-hand side of (B) is equal to $-1$. 
Then the equation (B) is equivalent to $uv=0$, which implies that $u=0$ or $v=0$. 

If $u=0$, then we have $c_1=-c_4$ and $c_2=-1-c_3$. 
Hence we obtain 
$(c_1,c_2,c_3,c_4)=(-a,-b-1,b,a)$ 
for some integers $a$ and $b$. 

If $u\ne 0$ and $v=0$, then $c_4+5$ is divided by $u$ 
because we have $c_4+5=(c_1-c_3-2c_4-5)u$. 
We rewrite $u$ and $c_4+5$ as $a$ and $ab$, respectively, 
where $a$ is a non-zero integer and $b$ is an integer. 
Then we obtain 
$(c_1,c_2,c_3,c_4)=((-b+1)a+5, (3b-2)a+b-11, (-3b+1)a-b+10, ab-5)$. 

This completes the proof of the theorem. 
\end{proof}

\begin{rem}
The equality (B) in the proof of Theorem \ref{CS5} can be derived also from an argument similar to 
the second proof of Theorem \ref{CS4}. 
\end{rem}

\begin{rem}
Each family of polynomials in Case I has the `reciprocal' family in Case II: 
one family consists of the signed reciprocal polynomials of the polynomials in the other family. 
Such pairs of families are I-i-1 \& II-i-3, I-i-2 \& II-i-4, I-i-3 \& II-i-1, I-i-4 \& II-i-2, I-ii-1 \& II-ii-1, and 
I-ii-2 \& II-ii-2 (see Proposition \ref{CSdual}). 
\end{rem}

\begin{rem}
It is not difficult to check that 
a polynomial $f(x)$ in Cases I-i and II-i of Theorem \ref{CS5} satisfies the positivity condition 
if and only if $a$ satisfies the conditions shown below. 

\medskip

\begin{center}
\noindent
\begin{tabular}{|c|c||c|c||c|c||c|c|} \hline 
Case & positivity & Case & positivity & Case & positivity & Case & positivity \\ \hline 
I-i-1 & $a\leq 0$ & 
I-i-2 & $a\leq 0$ & 
I-i-3 & $a\leq -3$ & 
I-i-4 & $a\leq -2$ \\ \hline
II-i-1 & $a\leq -2$ & 
II-i-2 & $a\leq -1$ & 
II-i-3 & $a\leq -1$ & 
II-i-4 & $a\leq -1$ \\ \hline
\end{tabular} 
\end{center}

\medskip

For a polynomial $f(x)$ in Case I-ii-1 of Theorem \ref{CS5}, 
the condition (a) below implies the positivity, 
and the positivity implies the condition (b) below. 
\[
\mathrm{(a)} \quad  
b \geq 
\begin{cases}
\frac{1}{4}(a-1)^2+2 & (a\geq 3) \\
a & (a<3) 
\end{cases}
\quad 
\mathrm{(b)} \quad  
b > 
\begin{cases}
\frac{1}{4}(a-1)^2+\frac{3}{2} & (a\geq 3) \\
a-\frac{1}{2} & (a<3) 
\end{cases}
\]

For a polynomial $f(x)$ in Case I-ii-2 of Theorem \ref{CS5}, 
it satisfies the positivity condition if and only if the pair $(a,b)$ satisfies one of the following conditions: 

$\bullet$ $a\geq 1$ and $b\leq 0$; 

$\bullet$ $a\leq -1$ and $b\geq 1$; 

$\bullet$ $(a,b)=(1,1), (1,2), (1,3), (1,4), (1,5), (2,1), (2,2), (3,1), (4,1)$, \\ 
\ \ \ \ \ \ \ \ \ \ \ \ \ \ \ \ \ $(5,1), (6,1), (-1,0), (-1,-1), (-1,-2), (-2,0), (-2,-1), $ \\
\ \ \ \ \ \ \ \ \ \ \ \ \ \ \ \ \ $(-3,0), (-4,0), (-5,0), (-6,0).$

For polynomials $f(x)$ in Cases II-ii-1 and II-ii-2, 
we can find similar good conditions which are necessary/sufficient for the positivity condition. 
\end{rem}

\begin{rem}
The method of the third proof of Theorem \ref{CS4} is also useful for proving 
that a given monic polynomial 
of degree $5$ is a Cappell-Shaneson polynomial. 
For a polynomial $f(x)$ in Case I-ii-1 of Theorem \ref{CS5}, 
we have $f_p(x)-f_p^*(x)=x^2(x+1)$ for every prime number $p$. 
Since $f_p(0)=-1\ne 0$, there exists no polynomial of degree $2$ with coefficients 
in $\overline{\mathbb{F}}_p$ which divides both of $f_p(x)$ and $f_p^*(x)$. 
It implies that $f_p(x)$ is $2$-regular over $\mathbb{F}_p$ by Theorem \ref{2-regular}. 
(Note that $f(x)$ is $1$-regular over $\mathbb{Q}$ and $f_p(x)$ is $1$-regular over $\mathbb{F}_p$ 
because $f(1)=1$.) 
The companion matrix $A$ of $f(x)$ is a Cappell-Shaneson matrix 
by Proposition \ref{CS-regular_p} and hence 
$f(x)$ is a Cappell-Shaneson polynomial. 
\end{rem}

\begin{rem}
Cappell and Shaneson \cite{CS1976} found infinitely many polynomials in Cases I-ii-1 and II-ii-1 
of Theorem \ref{CS5}. All polynomials in Cases I-ii-1 and II-ii-1 were found by Gu and Jiang \cite{GJ1999}. 
\end{rem}


\section{Degree 6}


In this section we examine Cappell-Shaneson polynomials of degree $6$. 
In particular, we give a complete list of Cappell-Shaneson polynomials of degree $6$ 
for which the difference of the coefficients of $x^5$ and $x$ is less than or equal to $12$. 

\subsection{Systems of Diophantine equations}

Let $f(x)=x^6+c_5x^5+c_4x^4+c_3x^3+c_2x^2+c_1x+c_0$ be 
a monic polynomial of degree $6$ with integer coefficients. 
Let $A$ be the companion matrix of $f(x)$. 
Since $f(x)$ is the characteristic polynomial of $A$, $f(x)$ is a Cappell-Shaneson polynomial 
if and only if $A$ is a Cappell-Shaneson matrix by Corollary \ref{CSCS}. 
The condition $\det A=1$ is equivalent to the condition $c_0=1$. 
Since we have $\det(I-A)=f(1)=c_1+c_2+c_3+c_4+c_5+2$, 
$A$ satisfies the condition $\mathrm{CS}_1$ if and only if the equality 
\begin{equation}
c_1+c_2+c_3+c_4+c_5+2=\pm 1 \tag{A}
\end{equation}
holds. 
By direct computation of $\det(I-\bigwedge^2A)$, 
we know that $A$ satisfies the condition $\mathrm{CS}_2$ if and only if the equality 
\begin{equation}
\begin{split}
& c_1^4-c_1^3c_3+c_1^2c_2c_4-c_1^2c_4^2-2c_1^3c_5-c_1c_2^2c_5+3c_1^2c_3c_5 
+c_1c_4^2c_5+c_2^2c_5^2 \\
& -3c_1c_3c_5^2-c_2c_4c_5^2+2c_1c_5^3+c_3c_5^3-c_5^4-3c_1^2c_2+c_2^3+3c_1^2c_4-3c_2^2c_4 \\
& +3c_2c_4^2-c_4^3+6c_1c_2c_5-6c_1c_4c_5-3c_2c_5^2+3c_4c_5^2=\pm 1 
\end{split}\tag{B}
\end{equation}
holds. 
By direct computation of $\det(I-\bigwedge^3A)$, 
we know that $A$ satisfies the condition $\mathrm{CS}_3$ if and only if the equality 
\begin{equation}
\begin{split}
& c_1^3+c_1^2c_4+c_1c_3c_5+c_2c_5^2+c_5^3-4c_1c_2+c_3^2 \\
& -4c_2c_4-2c_1c_5-4c_4c_5+4c_3+4=\pm 1
\end{split}\tag{C}
\end{equation}
holds. 
Consequently, $f(x)$ is a Cappell-Shaneson polynomial if and only if the $6$-tuple 
$(c_0,c_1,c_2,c_3,c_4,c_5)$ of its coefficients satisfies the system of Diophantine equations 
$c_0=1$, (A), (B) and (C). 

Let $\varepsilon_1,\varepsilon_2,\varepsilon_3$ be the right-hand side of the equalities (A), (B) and (C), 
respectively. Hence each of $\varepsilon_1,\varepsilon_2,\varepsilon_3$ is equal to $+1$ or $-1$. 
From the equality (A), we have 
\begin{equation}
c_3=-c_1-c_2-c_4-c_5-2+\varepsilon_1.  \tag{A'}
\end{equation}
We put $p:=c_4-c_2$ and $q:=c_5-c_1$. 
Using these equalities, we can rewrite the left-hand side of the equality (B), and obtain the following 
equality which is equivalent to (B). 
\[
(p+2q)(q(p-2q)c_1-q^2c_2-p^2+2pq-q^3-q^2)+\varepsilon_1q^3=\varepsilon_2
\]
If we put $w:=q(p-2q)c_1-q^2c_2-p^2+2pq-q^3-q^2$, this is equivalent to the following equality. 
\begin{equation}
(p+2q)w=\varepsilon_2-\varepsilon_1q^3 \tag{B'}
\end{equation}
Rewriting the left-hand side of the equality (C) in a similar way, we obtain the next equality. 
\begin{equation}
\varepsilon_1(c_1^2+(q-4)c_1-4c_2-2p-2q)-w+1=\varepsilon_3 \tag{C'}
\end{equation}
Consequently, $f(x)$ is a Cappell-Shaneson polynomial if and only if the $6$-tuple 
$(c_0,c_1,c_2,c_3,c_4,c_5)$ of its coefficients satisfies the system of Diophantine equations $c_0=1$, 
(A'), (B') and (C'). 

\subsection{Cappell-Shaneson polynomials for small $\boldsymbol{|c_5-c_1|}$}

We solve the system of Diophantine equations $c_0=1$, (A'), (B') and (C') 
when $|c_5-c_1|$ is small. 

\begin{prop}\label{CS6}
Let $f(x)=x^6+c_5x^5+c_4x^4+c_3x^3+c_2x^2+c_1x+c_0$ be 
a monic polynomial of degree $6$ with integer coefficients. 
If $c_1$ and $c_5$ satisfy the inequality $0\leq c_5-c_1\leq 12$, 
then $f(x)$ is a Cappell-Shaneson polynomial if and only if the $6$-tuple $(c_0,c_1,c_2,c_3,c_4,c_5)$ of 
its coefficients is equal to one of those exhibited in the table of Appendix \ref{app}. 
Further, $f(x)$ is positive if and only if its coefficients satisfy 
the positivity condition indicated in each row in the table of Appendix \ref{app}. 
\end{prop}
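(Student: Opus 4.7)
\medskip

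\noindent
\textbf{Proof proposal.} The plan is to carry out an exhaustive case analysis of the Diophantine system $c_0=1$, (A'), (B'), (C') established just above, exploiting that the hypothesis $0\leq c_5-c_1\leq 12$ bounds the parameter $q=c_5-c_1$ to finitely many values. Once $p=c_4-c_2$ and $q$ are known, the tuple $(c_1,c_2)$ determines all of $c_3,c_4,c_5$ via (A') and the definitions of $p,q$, so the whole problem reduces to solving (B') and (C') for $(c_1,c_2,p)$ for each fixed $q\in\{0,1,\dots,12\}$ and each choice of signs $(\varepsilon_1,\varepsilon_2,\varepsilon_3)\in\{\pm1\}^3$.

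First I would handle equation (B'), which reads $(p+2q)\,w=N$ with $N:=\varepsilon_2-\varepsilon_1 q^3$. If $N\neq0$, then $p+2q$ must be an integer divisor of $N$; for each $q\leq 12$ there are only finitely many such divisors, and each choice fixes both $p$ and $w$. The exceptional case $N=0$ occurs precisely when $\varepsilon_1 q^3=\varepsilon_2$, so only for $q\in\{0,1\}$ with appropriate signs; there $p=-2q$ is forced and $w$ is then read off from its defining formula. Next, using the definition
\[
w=q(p-2q)c_1-q^2c_2-p^2+2pq-q^3-q^2,
\]
for $q\geq 1$ I would solve for $c_2$ as an affine function of $c_1$ (with rational coefficients whose integrality I must track), and for $q=0$ I would record the consistency condition $w=-p^2$ together with the resulting constraint on $\varepsilon_2$ and $p$.

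Substituting the resulting expression for $c_2$ into (C') produces a single Diophantine equation that is quadratic in $c_1$ with parameters depending on $(q,\varepsilon_1,\varepsilon_2,\varepsilon_3,p)$. For each fixed parameter set this either determines $c_1$ uniquely, parametrises a one-parameter family in $c_1$ (when the quadratic degenerates), or has no integer solutions. From every surviving $(c_1,c_2,p,q)$ I recover $c_3,c_4,c_5$ via (A') and normalise the presentation so that equivalent parametrisations (via shifts in the free integer parameter) are merged. The output is a finite collection of one-parameter families indexed by integers $a$ (and possibly a handful of sporadic solutions), which should match the table in Appendix~\ref{app} line by line. Finally, for each family I would verify the positivity condition $f(t)>0$ on $(-\infty,0)$ by writing $f(t)$ as a polynomial in $t$ and in the free parameter $a$, then showing that the sign on $(-\infty,0)$ reduces to an elementary inequality in $a$ (e.g.\ by locating the real critical points of $t\mapsto f(t)$ on the negative axis or by bounding $f(t)$ from below by a manifestly positive expression).

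The main obstacle is the sheer volume of bookkeeping: thirteen values of $q$, eight sign combinations, and for each choice an enumeration of divisors of $N=\varepsilon_2-\varepsilon_1 q^3$, which for $q=12$ already has on the order of twenty divisors. To make this tractable I would carry out the enumeration in SageMath or a comparable system, use Proposition~\ref{CSdual} (the symmetry $f\leftrightarrow f^*$) to pair cases corresponding to $q$ and $-q$ and thereby halve the verification, and treat the degenerate sub-cases $q=0$ and $p+2q=0$ separately by hand. The delicate analytic step is confirming completeness: checking that every divisor of $N$ has been considered and that no parametrised quadratic in $c_1$ with rational non-integer coefficients has been mistakenly declared empty when in fact specific integer values of the parameter yield integer solutions. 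Once completeness is secured, the positivity verification is routine.
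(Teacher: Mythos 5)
Your overall strategy coincides with the paper's: reduce to the system $c_0=1$, (A$'$), (B$'$), (C$'$), fix $q=c_5-c_1\in\{0,\dots,12\}$ and the signs $(\varepsilon_1,\varepsilon_2,\varepsilon_3)$, read $p+2q$ off as a divisor of $N=\varepsilon_2-\varepsilon_1q^3$, use the definition of $w$ to express $c_2$ affinely in $c_1$, and substitute into (C$'$) to obtain a quadratic in $c_1$ (the paper, which only sketches this via worked examples for $q=0,1,3$, additionally uses congruences such as reduction of (B$'$) modulo $3$ to discard most divisors of $N$ quickly).

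There is, however, one concrete gap in your case analysis. When $N=0$ the equation (B$'$) reads $(p+2q)w=0$, whose solutions are $p+2q=0$ \emph{or} $w=0$; you assert that ``$p=-2q$ is forced,'' thereby discarding the branch $w=0$ with $p+2q\neq 0$. This branch is not empty: for $q=1$ and $\varepsilon_1=\varepsilon_2$ the condition $w=(p-2)c_1-c_2-p^2+2p-2=0$ combined with (C$'$) yields, after eliminating $c_2$, the equation $(c_1-2p+2)(c_1-2p+3)=\varepsilon_3-1$, and its solutions $c_1=2p-3$ and $c_1=2p-2$ produce two of the infinite families in the $q=1$ block of the table in Appendix \ref{app} (the rows $(1,2a-1,a^2-3a,-2a^2+a-1,a^2-2a+1,2a)$ and $(1,2a,a^2-2a-1,-2a^2-a-1,a^2-a,2a+1)$). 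As written, your enumeration would omit these and the list would fail to be complete. (A smaller slip: $N=0$ cannot occur for $q=0$, since there $N=\varepsilon_2=\pm1$; the only relevant value in the given range is $q=1$ with $\varepsilon_1=\varepsilon_2$.) Once the $w=0$ branch is restored, the rest of the plan, including the divisor enumeration for $q\geq 2$ and the positivity check family by family, matches the paper's intended argument.
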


For a polynomial $f(x)$ which satisfies the inequality $-12\leq c_5-c_1\leq -1$, 
the signed reciprocal polynomial $f^*(x)$ satisfies the inequality $1\leq c_5-c_1\leq 12$, 
and $f(x)$ is a Cappell-Shaneson polynomial if and only if $f^*(x)$ is a Cappell-Shaneson polynomial. 
We thus obtain a complete list of Cappell-Shaneson polynomials of degree $6$ 
which satisfy the inequality $-12\leq c_5-c_1\leq 12$ from Proposition \ref{CS6}. 

We will not give the full proof of Proposition \ref{CS6} because it consists of many individual 
considerations of solutions of the above system of Diophantine equations. 
Instead, we show how to solve the system of equations in several special cases, 
which would be enough for the reader to recover the whole proof. 

\begin{exmp}
Suppose that $c_5-c_1=0$ and $\varepsilon_1=\varepsilon_2=1$. 
Since we have $q=0$, the equality (B') is equivalent to $-p^3=\varepsilon_2=1$. 
Hence we have $p=-1$, 
and then $c_4=c_2-1$, $c_5=c_1$, and $c_3=-2c_1-2c_2-1+\varepsilon_1=-2c_1-2c_2$. 
From the equation (C'), we obtain $(c_1-2)^2-4c_2=\varepsilon_3$, 
which has integral solutions if and only if $(c_1-2)^2\equiv\varepsilon_3\; (\mathrm{mod}\; 4)$. 
This congruence has solutions if and only if $\varepsilon_3=1$, and 
every solution for $\varepsilon_3=1$ is expressed as $c_1=2a+1$, where $a$ is an integer. 
Hence we obtain 
\[
(c_0,c_1,c_2,c_3,c_4,c_5)=(1,\, 2a+1,\, a^2-a,\, -2a^2-2a-2,\, a^2-a-1,\, 2a+1)
\]
for some integer $a$. 
\end{exmp}

\begin{exmp}
Suppose that $c_5-c_1=1$ and $\varepsilon_1=\varepsilon_2=1$. 
Since we have $q=1$, the equalities (B') and (C') are equivalent to $(p+2)w=0$ 
and $c_1^2-3c_1-4c_2-2p-1-w=\varepsilon_3$, respectively, 
where $w=(p-2)c_1-c_2-p^2+2p-2$. 
Note that $c_5=c_1+1$ by assumption, and $c_4=c_2+p$ by definition. 
We then have $c_3=-2c_1-2c_2-p-2$. 

Suppose that $p=-2$. We have $c_4=c_2-2$ and $c_3=-2c_1-2c_2$. 
The equation (C') is equivalent to $c_1^2+c_1-3c_2+13=\varepsilon_3$, 
which has integral solutions if and only if $c_1^2+c_1+13\equiv\varepsilon_3\; (\mathrm{mod}\; 3)$. 
This congruence has solutions if and only if $\varepsilon_3=1$, and 
every solution for $\varepsilon_3=1$ is expressed as $c_1=3a-1$ or $c_1=3a$, where $a$ is an integer. 
Hence we obtain 
\begin{align*}
(c_0,c_1,c_2,c_3,c_4,c_5) & =(1,\, 3a-1,\, 3a^2-a+4,\, -6a^2-4a-6,\, 3a^2-a+2,\, 3a), \\
& =(1,\, 3a,\, 3a^2+a+4,\, -6a^2-8a-8,\, 3a^2+a+2,\, 3a+1)
\end{align*}
for some integer $a$. 

Suppose that $w=0$, which implies $c_2=(p-2)c_1-p^2+2p-2$, and 
$c_1^2-3c_1-4c_2-2p-1=\varepsilon_3$. 
Eliminating $c_2$ from these equalities, we obtain 
$(c_1-2p+2)(c_1-2p+3)=\varepsilon_3-1$. 
This equation has integral solutions if and only if $\varepsilon_3=1$, and 
every solution for $\varepsilon_3=1$ is expressed as $c_1=2p-3$ or $c_1=2p-2$. 
Computing $c_2,c_3,c_4,c_5$ from $c_1$ and substituting $a+1$ for $p$, we obtain
\begin{align*}
(c_0,c_1,c_2,c_3,c_4,c_5) & =(1,\, 2a-1,\, a^2-3a,\, -2a^2+a-1,\, a^2-2a+1,\, 2a), \\
& =(1,\, 2a,\, a^2-2a-1,\, -2a^2-a-1,\, a^2-a,\, 2a+1). 
\end{align*}
\end{exmp}

\begin{exmp}
Suppose that $c_5-c_1=3$ and $\varepsilon_1=\varepsilon_2=1$. 
Since we have $q=3$, we obtain $(p+6)w=-26$ from the equality (B'), 
and $c_1^2-c_1-4c_2-2p-5-w=\varepsilon_3$ from the equation (C'). 
Since $p+6$ is a divisor of $-26$, it must be $\pm 1,\pm 2,\pm 13$, or $\pm 26$. 
Considering the equality (B') modulo $3$, we have the congruence 
$(p+6)^3\equiv p^3\equiv -1\, (\textrm{mod}\; 3)$. Therefore $p+6$ must be $2,26,-1$, or $-13$. 

Suppose that $p+6=2$. 
We have $w=-13$ and $p=-4$, and then $w=-30c_1-9c_2-76$ and $10c_1+3c_2+21=0$. 
This equality together with the equation (C') implies $3c_1^2+37c_1+132=3\varepsilon_3$, 
which has no integral solutions. 
Therefore we have no solution $(c_0,c_1,c_2,c_3,c_4,c_5)$ in this case. 

Suppose that $p+6=26$. 
We have $w=-1$ and $p=20$, and then $w=42c_1-9c_2-316$ and $14c_1-3c_2-105=0$. 
This equality together with the equation (C') implies $3c_1^2-59c_1+288=3\varepsilon_3$, 
which has no integral solutions. 
Therefore we have no solution $(c_0,c_1,c_2,c_3,c_4,c_5)$ in this case. 

Suppose that $p+6=-1$. 
We have $w=26$ and $p=-7$, and then $w=-39c_1-9c_2-127$ and $13c_1+3c_2+51=0$. 
This equality together with the equation (C') implies $(c_1+12)(3c_1+13)=3(1+\varepsilon_3)$, 
which has integral solutions if and only if $\varepsilon_3=-1$, and 
the solution for $\varepsilon_3=-1$ is $c_1=-12$. 
Thus we obtain 
\[
(c_0,c_1,c_2,c_3,c_4,c_5)=(1,-12,35,-43,28,-9). 
\]

Suppose that $p+6=-13$. 
We have $w=2$ and $p=-19$, and then $w=-75c_1-9c_2-511$ and $25c_1+3c_2+171=0$. 
This equality together with the equation (C') implies $3c_1^2+97c_1+777=3\varepsilon_3$, 
whose solution is $c_1=-15$ if $\varepsilon_3=-1$ and $c_1=-18$ if $\varepsilon_3=1$. 
Thus we obtain 
\[
(c_0,c_1,c_2,c_3,c_4,c_5)=(1,-15,68,-91,49,-12), (1,-18,93,-135,74,-15). 
\]
\end{exmp}

\begin{rem}
There exist infinitely many Cappell-Shaneson polynomials of degree $6$ with $c_5-c_1=q$ 
if $q$ is equal to $1$, $0$, or $-1$, while there exist only finitely many such polynomials 
if $|q|$ is greater than one. 
Gu and Jiang \cite{GJ1999} found 
all polynomials in the first row in the table of Appendix \ref{app}. 
\end{rem}

Although it is possible to carry out a similar computation for each $q$ greater than $12$, 
the more the number of divisors of $q^3\pm 1$ increases, 
the more complicated the computation for such a $q$ becomes. 

\subsection{Other families of Cappell-Shaneson polynomials}

We prove that there exist at least four Cappell-Shaneson polynomials of degree $6$ 
with $c_5-c_1=q$ for every integer $q$. 

\begin{prop}\label{hol}
Let $f(x)=x^6+c_5x^5+c_4x^4+c_3x^3+c_2x^2+c_1x+c_0$ be 
a monic polynomial of degree $6$ with integer coefficients. 
For every integer $q$, if the $6$-tuple $(c_0,c_1,c_2,c_3,c_4,c_5)$ of 
its coefficients is equal to one of those exhibited below, 
then $f(x)$ is a Cappell-Shaneson polynomial which satisfies $c_5-c_1=q$. 
\begin{small}
\begin{longtable}{|c||c|c|c|c|c|c|} \hline 
$c_5\! -\! c_1$ & $c_0$ & $c_1$ & $c_2$ & $c_3$ & $c_4$ & $c_5$ \\ \hline 
$q$ & $1$ & $-3q-9$ & $2q^2+15q+30$ & $-3q^2-22q-42$ & $q^2+12q+29$ & $-2q-9$ \\ \cline{2-7}
& $1$ & $-2q-3$ & $q^2+4q+5$ & $-q^2-4q-6$ & $3q+4$ & $-q-3$ \\ \cline{2-7}
& $1$ & $2q-9$ & $q^2-12q+29$ & $-3q^2+22q-42$ & $2q^2-15q+30$ & $3q-9$ \\ \cline{2-7}
& $1$ & $q-3$ & $-3q+4$ & $-q^2+4q-6$ & $q^2-4q+5$ & $2q-3$ \\ \hline
\end{longtable}
\end{small}
\end{prop}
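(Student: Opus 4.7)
The plan is to verify, for each of the four families in the table, that the $6$-tuple of coefficients satisfies the system of Diophantine equations $c_0 = 1$, (A), (B), (C) derived in the previous subsection, which by Corollary \ref{CSCS} characterizes Cappell-Shaneson polynomials of degree $6$. The conditions $c_0 = 1$ and $c_5 - c_1 = q$ are immediate by inspection, so only (A), (B), (C) require checking.

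A useful first reduction is provided by Proposition \ref{CSdual}. Direct comparison of the four rows of the table shows that the third (resp.\ fourth) family at parameter $-q$ coincides, as coefficient tuple, with the signed reciprocal of the first (resp.\ second) family at parameter $q$: the substitution $q \mapsto -q$ in Family 3 produces exactly $(c_5,c_4,c_3,c_2,c_1)$ of Family 1 at $q$. Since the signed reciprocal of a Cappell-Shaneson polynomial is again a Cappell-Shaneson polynomial, it suffices to verify the statement for Families 1 and 2.

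For each of these remaining two families I would carry out the verification via the equivalent reformulated system (A'), (B'), (C'), which is tailored to the parameter $q = c_5 - c_1$. Setting $p := c_4 - c_2$ and $w := q(p-2q)c_1 - q^2 c_2 - p^2 + 2pq - q^3 - q^2$, both become polynomials in $q$ after substitution. For Family 1 one finds $\varepsilon_1 = 1$, $p + 2q = -q^2 - q - 1$, and $w = q - 1$, so that $(p+2q)w = 1 - q^3$ verifies (B') with $\varepsilon_2 = 1$; the analogous substitution into (C') then yields $\varepsilon_3 = 1$. For Family 2 one finds $\varepsilon_1 = -1$, $p + 2q = -q^2 + q - 1$, and $w = -q - 1$, hence $(p+2q)w = q^3 + 1$ verifies (B') with $\varepsilon_2 = 1$, and (C') gives $\varepsilon_3 = -1$.

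The only real obstacle is bookkeeping. A naive substitution into (B) produces a polynomial of degree $8$ in $q$ that must collapse to $\pm 1$, which is tedious to handle directly. Routing through the intermediate quantities $p$ and $w$ collapses the higher-degree cancellations: in both families $w$ is in fact \emph{linear} in $q$, so that the quartic and cubic terms of $w$ vanish identically. This is the main computational step, and once it is done the identity $(p+2q)w = \varepsilon_2 - \varepsilon_1 q^3$ reduces to a short verification, and (C') becomes a one-line check.
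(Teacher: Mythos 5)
Your proposal is correct and follows essentially the same route as the paper, which simply verifies that each family satisfies $c_0=1$ and the reformulated system (A$'$), (B$'$), (C$'$); your computed values ($\varepsilon_1=1$, $p+2q=-q^2-q-1$, $w=q-1$ for the first family, and $\varepsilon_1=-1$, $p+2q=-q^2+q-1$, $w=-q-1$ for the second) all check out, as does the observation that the third and fourth families at $-q$ are the signed reciprocals of the first and second at $q$. The use of Proposition \ref{CSdual} to halve the verification is a nice economy but does not change the substance of the argument.
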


\begin{proof}
It is not difficult to see that the coefficients of $f(x)$ 
satisfy the equalities $c_0=1$, (A'), (B') and (C') 
for every $6$-tuple $(c_0,c_1,c_2,c_3,c_4,c_5)$ in the table above. 
\end{proof}

\begin{rem}
It is not difficult to check that 
the polynomial $f(x)$ in the first, second, third and fourth row 
in the table of Proposition \ref{hol} satisfies the positivity condition 
if and only if $q$ satisfies the condition $q\geq -4$, $q\geq -2$, $q\leq 4$ and $q\leq 2$, respectively. 
\end{rem}

As shown in the table of Appendix \ref{app}, the number of Cappell-Shaneson polynomials of degree $6$ 
with $c_5-c_1=q$ is equal to $4$ if $q$ is equal to $4$, $5$, $8$, $10$, $11$, or $12$. 
We now pose the following problem. 

\begin{prob}\label{exist}
Do there exist infinitely many $q$ for which the number of Cappell-Shaneson polynomials of degree $6$ 
with $c_5-c_1=q$ is equal to $4$?
\end{prob}

Further computation tells us that 
the number of Cappell-Shaneson polynomials of degree $6$ 
with $c_5-c_1=q$ is equal to $4$ if $q$ is equal to 
$15$, $16$, $17$, $20$, $22$, $23$, $24$, $29$, $30$, $32$, $33$, $34$, or $40$. 

We give a definition of basic Cappell-Shaneson polynomials. 

\begin{defn}
Let $f(x)=x^6+c_5x^5+c_4x^4+c_3x^3+c_2x^2+c_1x+c_0$ be a Cappell-Shaneson polynomial of 
degree $6$. We put $p:=c_4-c_2$, $q:=c_5-c_1$, $w:=q(p-2q)c_1-q^2c_2-p^2+2pq-q^3-q^2$, 
and consider the elements $\varepsilon_1$, $\varepsilon_2$ of $\{1,-1\}$ 
defined by the equalities (A') and (B'). We assume that $q\geq 2$. 
A divisor $d$ of $\varepsilon_2-\varepsilon_1q^3$ is called {\it basic} if 

$\bullet$ $d$ is equal to 
$\pm 1$, $\pm (q-1)$, $\pm (q^2+q+1)$, or $\pm (q^3-1)$ if $\varepsilon_1=\varepsilon_2$, 

$\bullet$ 
$d$ is equal to $\pm 1$, $\pm (q+1)$, $\pm (q^2-q+1)$, or $\pm (q^3+1)$ if 
$\varepsilon_1\ne\varepsilon_2$. 

\noindent
We call $f(x)$ is {\it basic} if $p+2q$ is a basic divisor of 
$\varepsilon_2-\varepsilon_1q^3$. 
\end{defn}

All Cappell-Shaneson polynomials in the table of Proposition \ref{hol} are basic. 
In order to solve Problem \ref{exist} affirmatively, it is enough to show that there exist neither 
basic Cappell-Shaneson polynomials with $c_5-c_1=q$ other than those 
in the table of Proposition \ref{hol} 
nor non-basic Cappell-Shaneson polynomials with $c_5-c_1=q$ for infinitely many $q$. 

\begin{rem}
It follows from Faltings' theorem \cite{Faltings1983} 
that there exist only finitely many basic Cappell-Shaneson polynomials 
other than those in the table of Proposition \ref{hol}. 
Moreover, if the solutions of four Diophantine equations 

(i) $x^2y^2-x^3+x^2y+4x+4y+4=1$, 

(ii) $x^2y^2-x^3+x^2y+4x+4y+4=-1$, 

(iii) $x^2y^2-x^3+x^2y-4x+4y=1$, 

(iv) $x^2y^2-x^3+x^2y-4x+4y=-1$ 

\noindent
are equal to 

(i) $(x,y)=(-1,-5), (-1,0), (1,-3), (1,-2)$, 

(ii) $(x,y)=(3,-2)$, 

(iii) $(x,y)=(-1,-4), (-1,-1), (1,-6), (1,1)$, 

(iv) $(x,y)=(-1,-3), (-1,-2)$, 

\noindent
respectively, then there exist only four basic Cappell-Shaneson polynomials with $c_5-c_1\geq 3$ 
other than those in the table of Proposition \ref{hol}. 
Several methods 
for determining the set of rational points on a given algebraic curve might be useful 
to show that the solutions (i)--(iv) are all integral solutions of the equations (i)--(iv) 
(see \cite{BPS2016} and \cite{BMSST2008}). 
\end{rem}


\section{Higher degrees}


In this section we discuss Cappell-Shaneson polynomials of degree greater than or equal to $7$. 

\subsection{Degree $\boldsymbol{7}$}

Let $f(x)=x^7+c_6x^6+c_5x^5+c_4x^4+c_3x^3+c_2x^2+c_1x+c_0$ be 
a monic polynomial of degree $7$ with integer coefficients. 
Let $A$ be the companion matrix of $f(x)$. 
Since $f(x)$ is the characteristic polynomial of $A$, it is a Cappell-Shaneson polynomial 
if and only if $A$ is a Cappell-Shaneson matrix by Corollary \ref{CSCS}. 
The condition $\det A=1$ is equivalent to the condition $c_0=-1$. 
Since we have $\det(I-A)=f(1)=c_1+c_2+c_3+c_4+c_5+c_6$, 
the matrix $A$ satisfies the condition $\mathrm{CS}_1$ if and only if the equality 
\begin{equation*}
c_1+c_2+c_3+c_4+c_5+c_6=\pm 1 
\end{equation*}
holds. 
If we wrote down $\det(I-\bigwedge^2A)$ and $\det(I-\bigwedge^3A)$ as polynomials in $c_1$, 
$c_2$, $c_3$, $c_4$, $c_5$ and $c_6$, they would span several pages of this paper. 
Here we assume that $f(x)$ satisfies the equalities $c_1+c_6=0$ and $c_2+c_5=0$. 
Combining these equalities with the condition $\mathrm{CS}_1$, 
the determinants $\det(I-\bigwedge^2A)$ and $\det(I-\bigwedge^3A)$ are expressed as polynomials in 
$c_1$, $c_2$ and $c_3$ (see Appendix \ref{cs23}). 

\begin{prop}\label{CS7}
Let $f(x)=x^7+c_6x^6+c_5x^5+c_4x^4+c_3x^3+c_2x^2+c_1x+c_0$ be 
a monic polynomial of degree $7$ with integer coefficients. 
If the $7$-tuple $(c_0,c_1,c_2,c_3,c_4$, $c_5,c_6)$ of 
its coefficients is equal to one of those exhibited below, 
where $a$ stands for an arbitrary integer, 
then $f(x)$ is a Cappell-Shaneson polynomial which satisfies $c_1+c_6=c_2+c_5=0$. 
Further, $f(x)$ is positive if and only if its coefficients satisfy 
the positivity condition indicated in each row in the table below. 
\begin{small}
\begin{longtable}{|c|c|c|c|c|c|c|c|} \hline 
$c_0$ & $c_1$ & $c_2$ & $c_3$ & $c_4$ & $c_5$ & $c_6$ & {\rm positivity} \\ \hline 
$-1$ & $-1$ & $a$ & $a+1$ & $-a$ & $-a$ & $1$ &  $a\leq 5$ \\ \hline
$-1$ & $-1$ & $a$ & $a$ & $-a+1$ & $-a$ & $1$ &  $a\notin \mathbb{Z}$ \\ \hline
$-1$ & $-1$ & $a$ & $a-1$ & $-a$ & $-a$ & $1$ &  $a\notin \mathbb{Z}$ \\ \hline
$-1$ & $-1$ & $a$ & $a$ & $-a-1$ & $-a$ & $1$ &  $a\leq 5$ \\ \hline
$-1$ & $a$ & $-a+2$ & $a^2+3$ & $-a^2-2$ & $a-2$ & $-a$ &  $a\geq -2$ \\ \hline
$-1$ & $a$ & $-a+2$ & $a^2+2$ & $-a^2-1$ & $a-2$ & $-a$ &  $a\geq 0$ \\ \hline
$-1$ & $a$ & $-a+2$ & $a^2+1$ & $-a^2-2$ & $a-2$ & $-a$ &  $a\geq 0$ \\ \hline
$-1$ & $a$ & $-a+2$ & $a^2+2$ & $-a^2-3$ & $a-2$ & $-a$ &  $a\geq -2$ \\ \hline
\end{longtable} 
\end{small}
\end{prop}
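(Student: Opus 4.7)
The plan is to specialize the general analysis to the symmetry assumption $c_0=-1$, $c_1+c_6=0$, $c_2+c_5=0$, reducing the three conditions $\mathrm{CS}_1$, $\mathrm{CS}_2$, $\mathrm{CS}_3$ to elementary Diophantine identities in only three unknowns, and then to verify each of the eight candidate $7$-tuples by direct substitution. First I would set $c_6=-c_1$, $c_5=-c_2$, $c_0=-1$, so that $\det A=1$ automatically. The condition $\mathrm{CS}_1$, which requires $f(1)=\pm 1$, then collapses to $c_3+c_4=\pm 1$, matching the two choices $c_4=-c_3-1$ and $c_4=-c_3+1$ (or their sign variants) appearing in the four ``constant'' and four ``quadratic'' families of the table.

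Next I would appeal to the formulas for $\det(I-\bigwedge^2 A)$ and $\det(I-\bigwedge^3 A)$ that are recorded in Appendix~\ref{cs23}: under our symmetry relations these expressions are polynomials purely in $c_1,c_2,c_3$ (after eliminating $c_4$ via $c_3+c_4=\pm1$). Each of the eight candidate rows is of the form $(c_1,c_2,c_3,c_4)=(\pm 1,a,a\pm\varepsilon,\mp a\mp\varepsilon')$ or $(a,-a+2,a^2+\delta,-a^2-\delta')$; I would substitute these parametrizations into the two polynomial expressions from Appendix~\ref{cs23} and simplify. Because of the carefully chosen quadratic shifts ($a^2+3$ versus $-a^2-2$, etc.), the $a$--dependence should cancel and both determinants should reduce to $\pm 1$. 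By Corollary~\ref{CSCS} and the definition of Cappell-Shaneson matrix, this forces the companion matrix of $f(x)$ to be a Cappell-Shaneson matrix, hence $f(x)$ is a Cappell-Shaneson polynomial.

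For positivity I would analyze $(-1)^7 f(t)=-f(t)$ on $(-\infty,0)$ for each row. The four ``symmetric'' families with $c_3=a\pm 1$, $c_4=\mp a$ factor nicely: substituting $t=-s$ with $s>0$ produces a polynomial whose sign structure is controlled by a single threshold on $a$, giving the bounds $a\le 5$ listed. For the two families whose claimed positivity is $a\notin\mathbb{Z}$, I would exhibit an explicit $t_0<0$ (for instance $t_0=-1$ or a small negative value) at which $-f(t_0)\le 0$ for every integer $a$, establishing that these families are never positive. For the four ``quadratic'' families, I would substitute $t=-s$ and look at $-f(-s)$ as a polynomial in $s$ with coefficients in $a$; the positivity on $s>0$ reduces, after grouping the $a^2$-terms with the dominant monomials in $s$, to a linear inequality in $a$ yielding the thresholds $a\ge -2$ or $a\ge 0$.

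The main obstacle will be the positivity analysis for the quadratic families, where the coefficients depend on $a^2$ and where it is not obvious a priori at which real $t<0$ the minimum of $-f(t)$ is attained; here I would either complete squares in $a$ (viewing $-f(t)$ as a quadratic in $a$ for each fixed $t$) and show the discriminant condition in $t$ has no real root in $(-\infty,0)$ once $a$ meets the stated bound, or else reduce to evaluating $-f(t)$ and its derivative at a finite set of candidate critical points and rule out sign changes there. The CS-condition verification itself is routine algebra once Appendix~\ref{cs23} is in hand, so the paper's only real computational content lies in the positivity half of the statement.
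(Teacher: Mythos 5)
Your plan is correct and follows essentially the same route as the paper: impose the symmetry $c_0=-1$, $c_1+c_6=c_2+c_5=0$, note that $\mathrm{CS}_1$ reduces to $c_3+c_4=\pm1$, and then verify $\mathrm{CS}_2$ and $\mathrm{CS}_3$ by substituting each parametrized row into the explicit polynomials of Appendix~\ref{cs23} (where in fact $\det(I-\bigwedge^2A)$ is identically $\pm1$ under these constraints, so only the $\mathrm{CS}_3$ polynomial requires actual cancellation of the $a$-dependence). The paper's proof is exactly this direct verification, and your positivity analysis (e.g.\ evaluating at $t_0=-1$ to rule out the two never-positive families) is a sound way to carry out the part the paper leaves as ``not difficult to check.''
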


\begin{proof}
It is not difficult to see that the coefficients of $f(x)$ 
satisfy the equalities $c_0=-1$, $c_1+c_2+c_3+c_4+c_5+c_6=\pm 1$, 
$\det(I-\bigwedge^2A)=\pm 1$ and $\det(I-\bigwedge^3A)=\pm 1$ 
for every $7$-tuple $(c_0,c_1,c_2,c_3,c_4,c_5,c_6)$ in the table above. 
See Appendix \ref{cs23}. 
\end{proof}

\begin{rem}
Gu and Jiang \cite{GJ1999} found 
all polynomials in the first row in the table of Proposition \ref{CS7}. 
\end{rem}

It is not clear to the authors whether 
there exist many Cappell-Shaneson polynomials $f(x)$ of degree $7$ 
which do not satisfy the condition $c_1+c_6=c_2+c_5=0$. 

\subsection{Degree $\boldsymbol{8}$ and higher degrees}

Let $f(x)=x^8+c_7x^7+c_6x^6+c_5x^5+c_4x^4+c_3x^3+c_2x^2+c_1x+1$ be 
a doubly monic polynomial of degree $8$ with integer coefficients. 
By following the next three steps, 
we can verify that a given $f(x)$ is not a Cappell-Shaneson polynomial. 

Step 1: Choose a prime number $p$, and factor $f_p(x)$ over $\mathbb{F}_p$. 

Step 2: Since the algebraic closure of $\mathbb{F}_p$ is equal to $\bigcup_{i=1}^{\infty}\mathbb{F}_{p^i}$, 
there exists a positive integer $m$ such that $f_p(x)$ is decomposable in $\mathbb{F}_{p^m}$. 
Find such an integer $m$ and the roots of $f_p(x)$ in $\mathbb{F}_{p^m}$. 

Step 3: Compute all possible products of roots of $f_p(x)$ of length less than $5$, 
and check whether any of them is equal to one, in which case $f(x)$ is not a Cappell-Shaneson polynomial 
by Proposition \ref{CS-regular_p}. 

Using the software system {\sf SageMath}, it was confirmed that there exists 
no Cappell-Shaneson polynomial $f(x)$ of degree $8$ with $-6\leq c_1,\ldots ,c_7\leq 6$. 
The last polynomial which was checked is the polynomial $f(x)=x^8-2x^7-3x^6+3x^5-5x^4+6x^3-4x^2+4x+1$. 
It was detected not to be a Cappell-Shaneson polynomial with respect to the prime number 
$p=5525329$. This method is also useful for polynomials of degree $9$ or higher. 



\appendix

\begin{small}


\section{A list of Cappell-Shaneson polynomials of degree $6$}\label{app}


The following is a complete list of Cappell-Shaneson polynomials 
$f(x)=x^6+c_5x^5+c_4x^4+c_3x^3+c_2x^2+c_1x+c_0$ of degree $6$ which satisfy 
the inequality $0\leq q=c_5-c_1\leq 12$. The symbol $a$ stands for an arbitrary integer. 

\begin{longtable}{|c||c|c|c|c|c|c|c|} \hline 
$q$ & $c_0$ & $c_1$ & $c_2$ & $c_3$ & $c_4$ & $c_5$ & positivity \\ \hline 
$0$ & $1$ & $2a+1$ & $a^2-a$ & $-2a^2-2a-2$ & $a^2-a-1$ & $2a+1$ & $a\leq 0$ \\ \cline{2-8}
& $1$ & $2a+1$ & $a^2-a-1$ & $-2a^2-2a-2$ & $a^2-a-2$ & $2a+1$ & $a\leq -1$ \\ \cline{2-8}
& $1$ & $2a+1$ & $a^2-a-1$ & $-2a^2-2a-2$ & $a^2-a$ & $2a+1$ & $a\leq 0$ \\ \cline{2-8}
& $1$ & $2a+1$ & $a^2-a-2$ & $-2a^2-2a-2$ & $a^2-a-1$ & $2a+1$ & $a\leq -1$ \\ \hline
$1$ & $1$ & $3a-1$ & $3a^2-a+4$ & $-6a^2-4a-6$ & $3a^2-a+2$ & $3a$ & $a\in\mathbb{Z}$ \\ \cline{2-8}
& $1$ & $3a$ & $3a^2+a+4$ & $-6a^2-8a-8$ & $3a^2+a+2$ & $3a+1$ & $a\in\mathbb{Z}$ \\ \cline{2-8}
& $1$ & $2a-1$ & $a^2-3a$ & $-2a^2+a-1$ & $a^2-2a+1$ & $2a$ & $a\in\mathbb{Z}$ \\ \cline{2-8}
& $1$ & $2a$ & $a^2-2a-1$ & $-2a^2-a-1$ & $a^2-a$ & $2a+1$ & $a\leq 0$ \\ \cline{2-8}
& $1$ & $-5$ & $10$ & $-11$ & $7$ & $-4$ & Yes \\ \cline{2-8}
& $1$ & $-12$ & $45$ & $-67$ & $42$ & $-11$ & Yes \\ \cline{2-8}
& $1$ & $-2$ & $3$ & $-3$ & $2$ & $-1$ & Yes \\ \cline{2-8}
& $1$ & $-7$ & $18$ & $-23$ & $17$ & $-6$ & Yes \\ \cline{2-8}
& $1$ & $5a-2$ & $5a^2-11a+2$ & $-10a^2+12a-2$ & $5a^2-11a$ & $5a-1$ & $a\leq 0$ \\ \cline{2-8}
& $1$ & $5a-1$ & $5a^2-9a$ & $-10a^2+8a$ & $5a^2-9a-2$ & $5a$ & $a\leq 0$ \\ \cline{2-8}
& $1$ & $5a$ & $5a^2-7a-2$ & $-10a^2+4a+2$ & $5a^2-7a-4$ & $5a+1$ & $a\leq -1$ \\ \cline{2-8}
& $1$ & $5a+2$ & $5a^2-3a-4$ & $-10a^2-4a+2$ & $5a^2-3a-6$ & $5a+3$ & $a\leq -1$ \\ \cline{2-8}
& $1$ & $2a-1$ & $a^2-3a$ & $-2a^2+a-2$ & $a^2-2a$ & $2a$ & $a\leq 0$ \\ \cline{2-8}
& $1$ & $2a$ & $a^2-2a-1$ & $-2a^2-a-3$ & $a^2-a$ & $2a+1$ & $a\leq 2$ \\ \cline{2-8}
& $1$ & $2a-1$ & $a^2-3a$ & $-2a^2+a-3$ & $a^2-2a+1$ & $2a$ & $a\leq 3$ \\ \cline{2-8}
& $1$ & $2a$ & $a^2-2a-2$ & $-2a^2-a-2$ & $a^2-a$ & $2a+1$ & $a\leq 0$ \\ \hline
$2$ & $1$ & $-5$ & $11$ & $-12$ & $8$ & $-3$ & Yes \\ \cline{2-8}
& $1$ & $-7$ & $18$ & $-22$ & $15$ & $-5$ & Yes \\ \cline{2-8}
& $1$ & $-13$ & $53$ & $-72$ & $42$ & $-11$ & Yes \\ \cline{2-8}
& $1$ & $-15$ & $68$ & $-98$ & $57$ & $-13$ & Yes \\ \cline{2-8}
& $1$ & $1$ & $-4$ & $-4$ & $1$ & $3$ & No \\ \cline{2-8}
& $1$ & $3$ & $-3$ & $-10$ & $2$ & $5$ & No \\ \cline{2-8}
& $1$ & $-7$ & $17$ & $-18$ & $10$ & $-5$ & Yes \\ \cline{2-8}
& $1$ & $-13$ & $50$ & $-72$ & $43$ & $-11$ & Yes \\ \cline{2-8}
& $1$ & $-3$ & $4$ & $-4$ & $3$ & $-1$ & Yes \\ \cline{2-8}
& $1$ & $-5$ & $9$ & $-10$ & $8$ & $-3$ & Yes \\ \cline{2-8}
& $1$ & $-7$ & $15$ & $-14$ & $10$ & $-5$ & Yes \\ \cline{2-8}
& $1$ & $-9$ & $24$ & $-28$ & $19$ & $-7$ & Yes \\ \cline{2-8}
& $1$ & $-15$ & $69$ & $-98$ & $56$ & $-13$ & Yes \\ \cline{2-8}
& $1$ & $-17$ & $86$ & $-128$ & $73$ & $-15$ & Yes \\ \cline{2-8}
& $1$ & $1$ & $-3$ & $-4$ & $0$ & $3$ & No \\ \cline{2-8}
& $1$ & $-1$ & $-2$ & $-2$ & $1$ & $1$ & Yes \\ \hline
$3$ & $1$ & $-12$ & $35$ & $-43$ & $28$ & $-9$ & Yes \\ \cline{2-8}
& $1$ & $-15$ & $68$ & $-91$ & $49$ & $-12$ & Yes \\ \cline{2-8}
& $1$ & $-18$ & $93$ & $-135$ & $74$ & $-15$ & Yes \\ \cline{2-8}
& $1$ & $3$ & $-4$ & $-12$ & $4$ & $6$ & No \\ \cline{2-8}
& $1$ & $-9$ & $26$ & $-27$ & $13$ & $-6$ & Yes \\ \cline{2-8}
& $1$ & $-15$ & $64$ & $-91$ & $51$ & $-12$ & Yes \\ \cline{2-8}
& $1$ & $-3$ & $2$ & $-3$ & $3$ & $0$ & Yes \\ \cline{2-8}
& $1$ & $-6$ & $10$ & $-4$ & $2$ & $-3$ & Yes \\ \cline{2-8}
& $1$ & $-12$ & $38$ & $-48$ & $30$ & $-9$ & Yes \\ \cline{2-8}
& $1$ & $-15$ & $68$ & $-90$ & $48$ & $-12$ & Yes \\ \cline{2-8}
& $1$ & $-18$ & $94$ & $-136$ & $74$ & $-16$ & Yes \\ \cline{2-8}
& $1$ & $0$ & $-5$ & $-3$ & $2$ & $3$ & No \\ \hline
$4$ & $1$ & $-21$ & $122$ & $-178$ & $93$ & $-17$ & Yes \\ \cline{2-8}
& $1$ & $-11$ & $37$ & $-38$ & $16$ & $-7$ & Yes \\ \cline{2-8}
& $1$ & $-1$ & $-3$ & $-2$ & $2$ & $3$ & Yes \\ \cline{2-8}
& $1$ & $1$ & $-8$ & $-6$ & $5$ & $5$ & No \\ \hline
$5$ & $1$ & $-24$ & $155$ & $-227$ & $114$ & $-19$ & Yes \\ \cline{2-8}
& $1$ & $-13$ & $50$ & $-51$ & $19$ & $-8$ & Yes \\ \cline{2-8}
& $1$ & $1$ & $-6$ & $-7$ & $5$ & $6$ & No \\ \cline{2-8}
& $1$ & $2$ & $-11$ & $-11$ & $10$ & $7$ & No \\ \hline
$6$ & $1$ & $-27$ & $192$ & $-282$ & $137$ & $-21$ & Yes \\ \cline{2-8}
& $1$ & $-15$ & $65$ & $-66$ & $22$ & $-9$ & Yes \\ \cline{2-8}
& $1$ & $-3$ & $-5$ & $24$ & $-22$ & $3$ & No \\ \cline{2-8}
& $1$ & $3$ & $-7$ & $-18$ & $12$ & $9$ & No \\ \cline{2-8}
& $1$ & $3$ & $-14$ & $-18$ & $17$ & $9$ & No \\ \hline
$7$ & $1$ & $-30$ & $233$ & $-343$ & $162$ & $-23$ & Yes \\ \cline{2-8}
& $1$ & $-17$ & $82$ & $-83$ & $25$ & $-10$ & Yes \\ \cline{2-8}
& $1$ & $5$ & $-6$ & $-35$ & $23$ & $12$ & No \\ \cline{2-8}
& $1$ & $4$ & $-17$ & $-27$ & $26$ & $11$ & No \\ \cline{2-8}
& $1$ & $-21$ & $78$ & $-106$ & $60$ & $-14$ & Yes \\ \cline{2-8}
& $1$ & $-30$ & $248$ & $-346$ & $148$ & $-23$ & Yes \\ \cline{2-8}
& $1$ & $18$ & $56$ & $-228$ & $128$ & $25$ & No \\ \cline{2-8}
& $1$ & $-22$ & $81$ & $-111$ & $64$ & $-15$ & Yes \\ \hline
$8$ & $1$ & $-33$ & $278$ & $-410$ & $189$ & $-25$ & Yes \\ \cline{2-8}
& $1$ & $-19$ & $101$ & $-102$ & $28$ & $-11$ & Yes \\ \cline{2-8}
& $1$ & $7$ & $-3$ & $-58$ & $38$ & $15$ & No \\ \cline{2-8}
& $1$ & $5$ & $-20$ & $-38$ & $37$ & $13$ & No \\ \hline
$9$ & $1$ & $-36$ & $327$ & $-483$ & $218$ & $-27$ & Yes \\ \cline{2-8}
& $1$ & $-21$ & $122$ & $-123$ & $31$ & $-12$ & Yes \\ \cline{2-8}
& $1$ & $9$ & $2$ & $-87$ & $57$ & $18$ & No \\ \cline{2-8}
& $1$ & $6$ & $-23$ & $-51$ & $50$ & $15$ & No \\ \cline{2-8}
& $1$ & $34$ & $246$ & $-734$ & $410$ & $43$ & No \\ \cline{2-8}
& $1$ & $1$ & $-28$ & $67$ & $-51$ & $10$ & No \\ \cline{2-8}
& $1$ & $-31$ & $116$ & $-162$ & $96$ & $-22$ & Yes \\ \cline{2-8}
& $1$ & $-46$ & $566$ & $-852$ & $366$ & $-37$ & Yes \\ \hline
$10$ & $1$ & $-39$ & $380$ & $-562$ & $249$ & $-29$ & Yes \\ \cline{2-8}
& $1$ & $-23$ & $145$ & $-146$ & $34$ & $-13$ & Yes \\ \cline{2-8}
& $1$ & $11$ & $9$ & $-122$ & $80$ & $21$ & No \\ \cline{2-8}
& $1$ & $7$ & $-26$ & $-66$ & $65$ & $17$ & No \\ \hline
$11$ & $1$ & $-42$ & $437$ & $-647$ & $282$ & $-31$ & Yes \\ \cline{2-8}
& $1$ & $-25$ & $170$ & $-171$ & $37$ & $-14$ & Yes \\ \cline{2-8}
& $1$ & $13$ & $18$ & $-163$ & $107$ & $24$ & No \\ \cline{2-8}
& $1$ & $8$ & $-29$ & $-83$ & $82$ & $19$ & No \\ \hline
$12$ & $1$ & $-45$ & $498$ & $-738$ & $317$ & $-33$ & Yes \\ \cline{2-8}
& $1$ & $-27$ & $197$ & $-198$ & $40$ & $-15$ & Yes \\ \cline{2-8}
& $1$ & $15$ & $29$ & $-210$ & $138$ & $27$ & No \\ \cline{2-8}
& $1$ & $9$ & $-32$ & $-102$ & $101$ & $21$ & No \\ \hline
\end{longtable} 


\section{The conditions $\mathrm{CS}_2$ and $\mathrm{CS}_3$ in degree $7$}\label{cs23}


Let $f(x)=x^7+c_6x^6+c_5x^5+c_4x^4+c_3x^3+c_2x^2+c_1x+c_0$ be 
a monic polynomial of degree $7$ with $c_0=-1$ and $c_1+c_6=c_2+c_5=0$, and 
$A$ the companion matrix of $f(x)$. 
We assume that $f(x)$ satisfies the equality $c_1+c_2+c_3+c_4+c_5+c_6=\pm 1$. 
Then $\det(I-\bigwedge^2A)$ and 
$\det(I-\bigwedge^3A)$ are expressed as polynomials in $c_1$, $c_2$, $c_3$ as follows. 



If $c_1+c_2+c_3+c_4+c_5+c_6=1$, then $\det(I-\bigwedge^2A)=1$, and 
{\allowdisplaybreaks %
\begin{align*}
\det(I-\bigwedge^3A)
= & \; c_1^{7}+c_1^{6}+6c_1^{5}c_2-2c_1^{5}c_3-9c_1^{5}+4c_1^{4}c_2+9c_1^{3}c_2^{2}+8c_1^{4}c_3
     -10c_1^{3}c_2c_3 \\
     + & \, c_1^{3}c_3^{2}-30c_1^{4}-19c_1^{3}c_2-c_1^{2}c_2^{2}+4c_1c_2^{3}+35c_1
     ^{3}c_3+18c_1^{2}c_2c_3-8c_1c_2^{2}c_3 \\
     - & \, 17c_1^{2}c_3^{2}+4c_1c_2c_3^{2}-41c_1^{3}-45c_1^{2}c_2-13c_1c_2^{2}-4c_2^{3}+57c_1^{2}c_3 \\
     + & \, 38c_1c_2c_3+16c_2^{2}c_3-25c_1c_3^{2}-20c_2c_3^{2}+8c_3^{3}-32c_1^{2}-34c_1c_2 \\
      - & \, 15c_2^{2}+39c_1c_3+34c_2c_3-19c_3^{2}-14c_1-12c_2+13c_3-3. 
\end{align*}}
     

If $c_1+c_2+c_3+c_4+c_5+c_6=-1$, then $\det(I-\bigwedge^2A)=-1$, and 
{\allowdisplaybreaks %
\begin{align*}
\det(I-\bigwedge^3A)
= & \; -c_1^{7}-c_1^{6}-6c_1^{5}c_2+2c_1^{5}c_3+11c_1^{5}-4c_1^{4}c_2-9c_1^{3}c_2^{2}-8c_1^{4
     }c_3+10c_1^{3}c_2c_3 \\
     - & \, c_1^{3}c_3^{2}+22c_1^{4}+29c_1^{3}c_2+c_1^{2}c_2^{2}-4c_1c_2^{3}-37
     c_1^{3}c_3-18c_1^{2}c_2c_3+8c_1c_2^{2}c_3 \\
     + & \, 17c_1^{2}c_3^{2}-4c_1c_2c_3^{2}+5c_1^{3}+
     27c_1^{2}c_2+21c_1c_2^{2}+4c_2^{3}-23c_1^{2}c_3 \\
     - & \, 46c_1c_2c_3-16c_2^{2}c_3+25c_1c_3
     ^{2}+20c_2c_3^{2}-8c_3^{3}-8c_1^{2}-8c_1c_2 \\
     - & \, c_2^{2}+11c_1c_3+6c_2c_3-5c_3^{2}-2c_2+c_3+1. 
\end{align*}}
     
\end{small}

\end{document}